\def\RR{\mathbb R}
\def\NN{\mathbb N}
\def\ZZ{\mathbb Z}
\newcommand{\remove}[1]{ }
\newtheorem{definition}{Definition}[section]
\newtheorem{assumption}{Assumption}
\newtheorem{theorem}{Theorem}[section]
\newtheorem{proposition}[theorem]{Proposition}
\newtheorem{lemma}[theorem]{Lemma}
\newtheorem{remark}[theorem]{Remark}
\newtheorem{example}[theorem]{Example}
\numberwithin{equation}{section}
\begin{document}
\title[Internal observability in a triangular domain]{Internal 
observability of the wave equation in a triangular domain}

\author[V. Komornik]{Vilmos Komornik} 
\address{16 rue de Copenhague, 67000 Strasbourg, France}
\email{vilmos.komornik@gmail.com}

\author[A. C. Lai]{Anna Chiara Lai}
\address{Sapienza Università di  Roma,
Dipartimento di Scienze di Base
e Applicate per l'Ingegne\-ria,
via A. Scarpa n. 16,
00161  Roma, Italy}      
\email{anna.lai@sbai.uniroma1.it}
\author[P. Loreti]{Paola Loreti}
\address{Sapienza Università di  Roma,
Dipartimento di Scienze di Base
e Applicate per l'Ingegne\-ria,
via A. Scarpa n. 16,
00161  Roma, Italy}
\email{paola.loreti@sbai.uniroma1.it}
\subjclass[2010]{42B05, 52C20}
\keywords{Internal observability, wave equation, Fourier series, tilings.
}

\maketitle

\begin{abstract}  
We investigate the internal observability of the wave equation with Dirichlet 
boundary conditions in a triangular domain. More precisely, the domain taken 
into exam is the half of the equilateral triangle. Our approach is based on 
Fourier analysis and on tessellation theory: by means of a suitable tiling of 
the rectangle, we extend earlier observability results in the rectangle 
to the case of a triangular domain. The paper includes a general result 
relating problems in general domains to their tiles, and a discussion of the 
triangular case. As an application, we provide an estimation of the observation 
time when the observed domain is composed by three strips with a common 
side to the edges of the triangle.

\end{abstract}

\section{Introduction}\label{s1}

We consider the problem 
\begin{equation}\label{wave}
 \begin{cases}
  u_{tt}-\Delta u=0& \text{in }\RR\times \Omega\\
  u=0&\text{on }\RR\times \partial \Omega\\
  u(t,0)=u_0,~u_t(t,0)=u_1&\text{in }\Omega
 \end{cases}
\end{equation}
where $\Omega$ is a bounded open domain of $\RR^2$ that can be tiled by the open triangle 
$\mathcal T$ whose vertices are $(0,0),(1/\sqrt{3},0)$ and $(0,1)$.
More precisely, we use the symbol $cl(\Omega)$ to denote the closure of a set 
$\Omega$ and we say that an open set $\Omega_1$ tiles  $\Omega_2$ if there 
exist a finite number $N$ of rigid transformations $K_1,\dots,K_N$ such that 
$$cl(\Omega_2)=\bigcup_{h=1}^N K_h cl(\Omega_2).$$
and such that $K_h(\Omega_1)\cap K_j(\Omega_1)=\emptyset$ for all $h\not=j$.
The triangle $\mathcal T$ is the half of an equilateral triangle of side 
$2/\sqrt{3}$, and it tiles the rectangle
$$\mathcal  R:=(0,\sqrt{3})\times (0,1).$$

In particular, we have that the rectangle $\mathcal  R$ can be tiled by 
$\mathcal T$ by means of $6$ rigid transformations 
$K_1,\dots,K_6$: 
  to keep the discussion at an introductory level, we postpone the explicit 
definition of the $K_h$'s to Section \ref{st1}, however such 
tiling is depicted in Figure \ref{tiling}.
\begin{figure}
\begin{tikzpicture}[y=0.80pt, x=0.80pt, yscale=-1.000000, xscale=1.000000, inner sep=0pt, outer sep=0pt]
\begin{scope}[cm={{0.64759,0.0,0.0,-0.63445,(-725.19037,529.07247)}}]
  \begin{scope}
    \path[draw=black,line join=miter,line cap=rect,miter limit=3.25,line
      width=0.800pt] (12.6600,363.8480) -- (215.2190,13.0040) -- (12.6600,13.0040)
      -- (12.6600,363.8480) -- cycle;

    \path[draw=black,line join=miter,line cap=rect,miter limit=3.25,line
      width=0.800pt] (12.6600,363.8480) -- (417.7810,363.8480) --
      (316.5000,188.4260) -- (12.6600,363.8480) -- cycle;

    \path[draw=black,line join=miter,line cap=rect,miter limit=3.25,line
      width=0.800pt] (620.3400,13.0040) -- (215.2190,13.0040) -- (316.5000,188.4260)
      -- (620.3400,13.0040) -- cycle;

    \path[draw=black,line join=miter,line cap=rect,miter limit=3.25,line
      width=0.800pt] (620.3400,13.0040) -- (417.7810,363.8480) --
      (620.3400,363.8480) -- (620.3400,13.0040) -- cycle;

    \path[draw=black,line join=miter,line cap=rect,miter limit=3.25,line
      width=0.800pt] (12.6600,363.8480) -- (215.2190,13.0040) -- (316.5000,188.4260)
      -- (12.6600,363.8480) -- cycle;

    \path[draw=black,line join=miter,line cap=rect,miter limit=3.25,line
      width=0.800pt] (620.3400,13.0040) -- (417.7810,363.8480) --
      (316.5000,188.4260) -- (620.3400,13.0040) -- cycle;

  \end{scope}
\end{scope}
\path[xscale=1.010,yscale=0.990,fill=black,line join=miter,line cap=butt,line
  width=0.800pt] (-687.1593,478.9459) node[above right] (text4187) {$T_1$};

\path[xscale=1.010,yscale=0.990,fill=black,line join=miter,line cap=butt,line
  width=0.800pt] (-609.4599,422.4844) node[above right] (text4191) {$K_2(T_1)$};

\path[xscale=1.010,yscale=0.990,fill=black,line join=miter,line cap=butt,line
  width=0.800pt] (-489.8859,422.7302) node[above right] (text4191-3)
  {$K_5(T_1)$};

\path[xscale=1.010,yscale=0.990,fill=black,line join=miter,line cap=butt,line
  width=0.800pt] (-585.0149,346.5223) node[above right] (text4191-7)
  {$K_3(T_1)$};

\path[xscale=1.010,yscale=0.990,fill=black,line join=miter,line cap=butt,line
  width=0.800pt] (-536.5394,478.0724) node[above right] (text4191-35)
  {$K_4(T_1)$};

\path[xscale=1.010,yscale=0.990,fill=black,line join=miter,line cap=butt,line
  width=0.800pt] (-418.6524,346.5535) node[above right] (text4191-9)
  {$K_6(T_1)$};

\end{tikzpicture}
 \caption{The tiling of $\mathcal R$ with $\mathcal T$.  Note that $K_1$ is the identity map, hence $K_1(\mathcal T)=\mathcal T$. \label{tiling}}\end{figure}
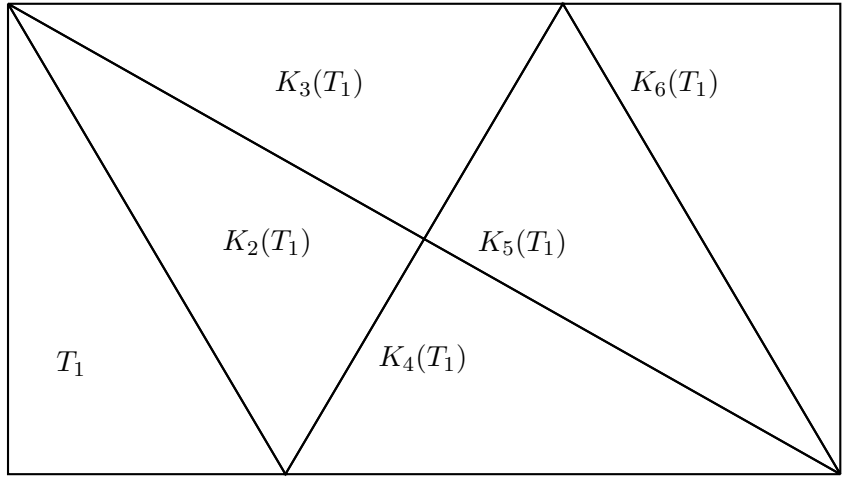
 
 As it is well known, a complete orthonormal base for $L^2( \mathcal R)$ is given by the eigenfunctions of $-\Delta$ in $H_0^1( \mathcal R)$
 $$\overline e_{k}:=\sin(\pi k_1 x_1/\sqrt{3})\sin (\pi k_2 x_2), \quad \text{where } k=(k_1,k_2),~k_1,k_2\in \NN$$
 and the associated eigenvalues are $\gamma_k=\frac{k_1^2}{3}+k_2^2$. 
 In \cite{triangle}, a folding technique (that we recall in detail in Section 
\ref{st1}) is used to derive from $\{\overline e_{k}\}$ an orthogonal base 
$\{e_{k}\}$ of $L^2(\mathcal T)$ formed by the eigenfunctions of $-\Delta$ in 
$H_0^1(\mathcal T)$. In particular, $\{e_{k}\}\subset span\{\overline e_{k}\}$ 
and $\{e_k\}$ and $\{\overline e_k\}$ share the same eigenvalues $\gamma_k$.
 
  The explicit knowledge of a eigenspace for $H_0^1(\mathcal T)$ allows us 
  to set the problem \eqref{wave} (with $\Omega=\mathcal T$) in the framework 
of Fourier analysis. Our goal is to exploit the deep relation between the 
eigenfunctions for $H_0^1(\mathcal R)$ and those of $H_0^1(\mathcal T)$ in order 
to extend known observability results for $\mathcal R$ to $\mathcal T$.
  
In particular, we are interested in the \emph{internal observability} of 
\eqref{wave}, i.e., in the validity of the estimates 
$$\|u_0\|_{L^2(\Omega)}^2+\|u_1\|^2_{H^{-1}(\Omega)}\asymp 
\int_0^T\int_{\Omega_0} |u(t,x)|^2 dx $$
where $\Omega_0$ is a subset of $\Omega$ and $T$ is sufficiently large. Here 
and in the sequel $A\asymp B$ means $c_1 A\leq B \leq c_2 A$ with some 
constants 
$c_1$ and $c_2$ which are independent from $A$ and $B$. When we need to stress 
the dependence of these estimates on the couple of constants $c=(c_1,c_2)$, we 
write $A \asymp_{c} B$. Also by writing $A\leq_c B$ we mean
the inequality $ c A\leq B$ while the expression $A\geq_c B$ denotes $c A\geq 
B$.

  \subsection{Statement of the main results}
  We begin by introducing a few notations.  Let 
$\{e_k\}\subset H^1_0(\Omega)\cap L^2(\Omega)$ be an orthonormal base of  
$L^2(\Omega)$ 
formed by eigenvalues of $-\Delta$ and let $\{\gamma_k\}$ be the associated 
eigenvalues. 
Denote by $D^s(\Omega)$ the completion of $\{e_k\}$ with respect to the 
Euclidean norm
$$\left\| \sum_{k\in\ZZ^2} c_k \gamma_k\right\|_s:=\left(\sum_{k\in\ZZ^2} 
\gamma_k^s |c_k|^2\right)^{1/2}.$$
Identifying $L^2(\Omega)$ with its dual we have
$$D^0(\Omega)=L^2(\Omega),\quad D^{-1}(\Omega)= H^{-1}(\Omega).$$

 We have
  \begin{theorem}\label{thmmainintro}
 Let  $\overline u$ be the solution of
  \begin{equation}\label{wavegeneralrectintro}
 \begin{cases}
  \overline u_{tt}-\Delta \overline u=0& \text{on }\RR\times  {\mathcal R}\\
  \overline u=0&\text{in }\RR\times \partial \mathcal R\\
  \overline u(t,0)= \overline u_0,~u_t(t,0)=\overline u_1&\text{in } {\mathcal R},
 \end{cases}
\end{equation}
 let $\overline S$ be a subset of $\mathcal R$ and assume that there exists a 
constant $T_{\overline S}\geq 0$ such that if $T>T_{\overline S}$ then there 
exists a couple of constants $c=(c_1,c_2)$ such that  $\overline u$ satisfies
 \begin{equation}\label{obsrectanglegintro}
 \|\overline u_0\|_0^2+\|\overline u_1\|^2_{-1}\asymp_{c} 
\int_0^T\int_{\overline 
S} |\overline u(t,x)|^2 dx  
 \end{equation}
  for all $(\overline u_0, \overline u_1)\in D^0(\mathcal R)\times 
D^{-1}(\mathcal R)$. 
Moreover let $u$ be the solution $u$ of 
  \begin{equation}\label{wavegeneraltrintro}
 \begin{cases}
  u_{tt}-\Delta u=0& \text{on }\RR\times  {\mathcal T}\\
  u=0&\text{in }\RR\times \partial \mathcal T\\
  u(t,0)=u_0,~u_t(t,0)=u_1&\text{in } {\mathcal T}.
 \end{cases}
\end{equation}
and set
$$S:=\bigcup_{h=1}^6 (K^{-1}_h \overline S\cap \mathcal T)$$
Then for each $T>T_{\overline S}$ the solution $u$  satisfies
\begin{equation}\label{obstrgintro}
\|u_0\|_0^2+\|u_1\|^2_{-1}\asymp_{c} \int_0^T\int_{S} |u(t,x)|^2 dx. 
\end{equation}
for all $(u_0,u_1)\in D^0(\mathcal T)\times D^{-1}(\mathcal T)$.

The result also holds by replacing every occurrence of $\asymp_c$ with $\leq_c$ 
or $\geq_c$.
\end{theorem}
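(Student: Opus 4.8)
The plan is to deduce the triangular estimate \eqref{obstrgintro} from the rectangular one \eqref{obsrectanglegintro} by an \emph{unfolding} argument: given a solution $u$ of \eqref{wavegeneraltrintro} on $\mathcal T$, I lift it to a solution $\overline u$ of \eqref{wavegeneralrectintro} on $\mathcal R$ whose restriction to each tile $K_h(\mathcal T)$ is a signed copy of $u$, apply the hypothesis to $\overline u$, and then push the resulting identity back down to $\mathcal T$ by the change of variables $x=K_h y$. The whole point is that the observation integral on $\mathcal R$ splits over the six tiles, and on each tile the rigid motion $K_h$ has Jacobian one.

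First I would make the unfolding precise from the folding of Section~\ref{st1}. Writing $u=\sum_k\bigl(a_k\cos(\sqrt{\gamma_k}\,t)+\gamma_k^{-1/2}b_k\sin(\sqrt{\gamma_k}\,t)\bigr)e_k$, I define $\overline u$ by replacing each triangle eigenfunction $e_k$ by the associated rectangle eigenfunction in $\mathrm{span}\{\overline e_k\}$ produced by the folding; since $\{e_k\}$ and $\{\overline e_k\}$ share the eigenvalues $\gamma_k$, the function $\overline u$ solves \eqref{wavegeneralrectintro} with the same time-frequencies and with initial data $(\overline u_0,\overline u_1)$ obtained by the same substitution. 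The two facts I would record are: (i) on each tile $\overline u(t,K_h y)=\epsilon_h\,u(t,y)$ for $y\in\mathcal T$, with a fixed sign $\epsilon_h\in\{-1,+1\}$ coming from the folding, so in particular $|\overline u(t,K_h y)|^2=|u(t,y)|^2$; and (ii) the unfolding multiplies both $\|\cdot\|_0^2$ and $\|\cdot\|_{-1}^2$ by the fixed factor $6$, that is $\|\overline u_0\|_0^2+\|\overline u_1\|_{-1}^2=6\bigl(\|u_0\|_0^2+\|u_1\|_{-1}^2\bigr)$, because the eigenvalues (hence the weights defining $D^s$) are preserved while each eigenfunction is replicated on the six congruent tiles.

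Next I would apply the hypothesis to $\overline u$ and evaluate the observation integral through (i). Since $\mathcal R$ is the essentially disjoint union of the tiles $K_h(\mathcal T)$ and each $K_h$ is a rigid motion, for every $t$
\[
\int_{\overline S}|\overline u(t,x)|^2\,dx=\sum_{h=1}^6\int_{K_h^{-1}\overline S\cap\mathcal T}|u(t,y)|^2\,dy=\int_{\mathcal T}m(y)\,|u(t,y)|^2\,dy,\qquad m(y):=\#\{h:\ K_h y\in\overline S\}.
\]
By the definition of $S=\bigcup_h(K_h^{-1}\overline S\cap\mathcal T)=\{m\ge1\}$ one has $1\le m\le 6$ on $S$; when $\overline S$ is invariant under the tiling (the symmetric situation relevant to the application, for which the constants are preserved exactly) this forces $m\equiv6$ on $S$, so that $\int_{\overline S}|\overline u|^2=6\int_S|u|^2$. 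Inserting this together with (ii) into \eqref{obsrectanglegintro} gives $6\bigl(\|u_0\|_0^2+\|u_1\|_{-1}^2\bigr)\asymp_c 6\int_0^T\!\int_S|u|^2$: the factor $6$ from the norm scaling cancels the factor $6$ from the six-fold integral, leaving the \emph{same} pair $c=(c_1,c_2)$ and hence \eqref{obstrgintro}. The one-sided statements require no extra work: the two displayed relations are equalities, so they turn each of the inequalities hidden in $\asymp_c$ into the corresponding inequality for $u$ separately, and $\leq_c$, $\geq_c$ transfer verbatim.

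I expect the main obstacle to be the rigorous justification of (i)--(ii), i.e.\ that the unfolding genuinely maps $H^1_0(\mathcal T)$-solutions to $H^1_0(\mathcal R)$-solutions. One must verify that the signed copies match continuously and with matching normal derivatives across the interior edges of the tiling --- a transmission condition secured by the Dirichlet condition of $u$ on the hypotenuse together with the correct choice of the signs $\epsilon_h$ --- and that $\overline u$ vanishes on $\partial\mathcal R$. This is exactly the content of the folding construction of Section~\ref{st1}, on which I would rely; once it is in place, the observability transfer reduces to the elementary change-of-variables bookkeeping above.
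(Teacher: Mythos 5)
Your overall strategy is the same as the paper's: the paper proves Theorem \ref{thmmainintro} by specializing a general tiling result (Theorem \ref{thmprol}), whose proof consists precisely of your steps --- the unfolding/folding correspondence between solutions on $\mathcal T$ and on $\mathcal R$ (Lemma \ref{lprol}), the scaling of the $D^0\times D^{-1}$ norms under prolongation (Lemma \ref{lcr}), and the splitting of the observation integral over the six tiles via the change of variables $x=K_h y$ with unit Jacobian. Your bookkeeping constants differ from the paper's ($6$ versus $N^2=36$ on each side, because the paper's prolongation multiplies coefficients by $N$ and its rectangular solution is $N$ times the folded one), but since the factors cancel between the two sides of the estimate in either accounting, this is immaterial.

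The one substantive issue is your treatment of the multiplicity function $m(y)=\#\{h: K_h y\in\overline S\}$. You correctly observe that the observation integral over $\overline S$ equals (up to normalization) $\int_{\mathcal T} m\,|u(t,\cdot)|^2$, and that this is a fixed multiple of $\int_S |u(t,\cdot)|^2$ only if $m$ is a.e.\ constant on $S$; you then restrict to tiling-invariant $\overline S$, where $m\equiv 6$. But the theorem is stated for an \emph{arbitrary} subset $\overline S\subset\mathcal R$, and the observation set used in the application (Theorem \ref{stripsedgeintro}) is not tiling-invariant, so your argument as written does not cover the statement you were asked to prove. For general $\overline S$, the bound $1\le m\le 6$ on $S$ only gives $\int_S|u|^2\le\int_{\mathcal T} m|u|^2\le 6\int_S|u|^2$, i.e.\ the conclusion with the constants degraded by a factor of $6$ in one direction --- still observability, but not with the same pair $c$ claimed in the statement. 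For what it is worth, the paper's own proof of Theorem \ref{thmprol} silently assumes the opposite extreme: it asserts that the sets $S_h=K_h^{-1}\overline S\cap\mathcal T$ are pairwise disjoint (i.e.\ $m\equiv 1$ on $S$), which also fails for general $\overline S$ (take $\overline S=\mathcal R$). So you have put your finger on a real gap, but you have not closed it: to obtain the theorem exactly as stated one must either impose a hypothesis forcing $m$ to be constant on $S$ or accept the loss of a factor $N$ in the constants.
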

We point out that the time of observability $T_{\bar S}$ stated in Theorem 
\ref{thmmainintro}, as well as the couple $c$ of constants in the estimates 
\eqref{obsrectanglegintro} and \eqref{obstrgintro}, are the same for both the 
domains $\mathcal R$ and $\mathcal T$. Also note that in Section \ref{st1} we 
prove a slightly stronger version of Theorem \ref{thmmainintro}, that is 
Theorem \ref{thmmain}: its precise statement requires some technicalities that 
we chose to avoid here, however we may anticipate to the reader that 
the assumption on initial data $(\overline u_0,\overline u_1)\in  D^0(\mathcal 
R)\times D^{-1}(\mathcal R)$ can be weakened by replacing $ D^0(\mathcal 
R)\times D^{-1}(\mathcal R)$ with an appropriate subspace. 

Now we state the second main result of the present paper: its proof strongly 
relies on Theorem \ref{thmmainintro} and on a couple of technical lemmas 
that can be found in Section \ref{st2}. 
 \begin{figure}
\includegraphics[scale=0.28]{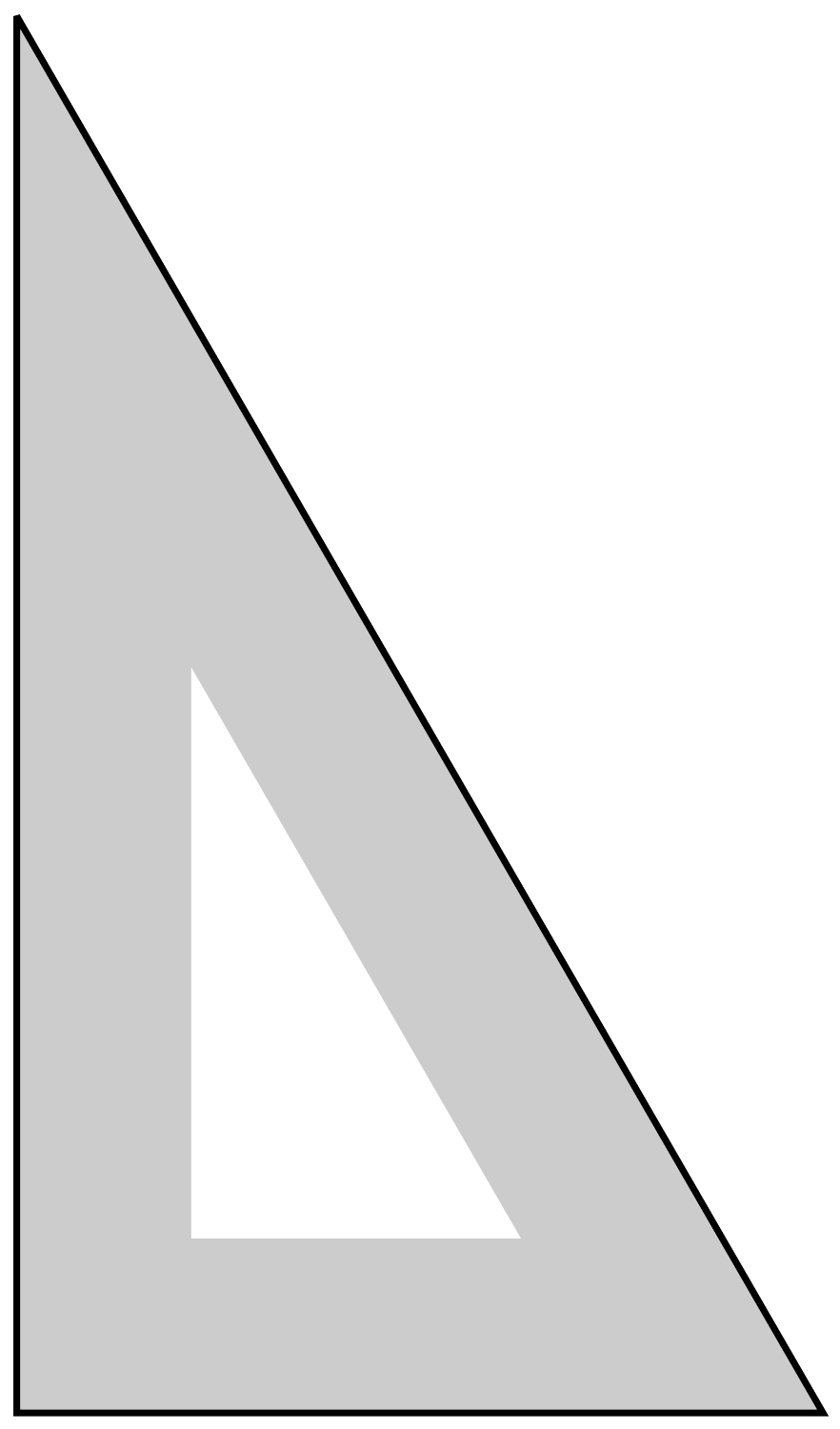}\caption{The triangle $\mathcal T$ and, 
in gray, the observation domain $S_\alpha$ with 
$\alpha=0.125$.\label{stripsfigintro}}
  \end{figure}
\begin{theorem}[Observability on strips along the edges of $\mathcal 
T$]\label{stripsedgeintro}
 Let $\alpha\in(0,1/(3+\sqrt{3})]$. Set $r_\alpha:=1-\alpha(3+\sqrt{3}),$ 
 $$S_{\alpha}:=  {\mathcal T}\setminus cl(r_\alpha \mathcal T+ (\alpha,\alpha)) 
             ,$$
 $$t_\alpha:=\inf_{k\in \NN} \int_0^\alpha \sin^2(\pi k x/\sqrt{3})dx$$
 and
$$T_\alpha:=8\sqrt{\frac{5}{\sqrt{3}}t_\alpha}.$$
 If $u$ is the solution of \eqref{wavegeneraltrintro}, then for every 
$T>T_\alpha$
 \begin{equation}\label{obstripsintro}
\|u_0\|_0^2+\|u_1\|^2_{-1}\leq_{c_\alpha} \int_0^T\int_{S_\alpha} |u(t,x)|^2 
dx. 
\end{equation}
for all $(u_0,u_1)\in D^0(\mathcal T)\times D^{-1}(\mathcal T)$ with
$$c_\alpha:= \frac{T}{\pi}\left(\frac{t_\alpha}{\sqrt{3}}
-\frac{40}{3 T^2}\right)>0.$$
\end{theorem}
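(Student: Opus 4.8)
The plan is to deduce \eqref{obstripsintro} from the corresponding one-sided observability inequality on the rectangle $\mathcal R$, via the lower-bound ($\leq_c$) version of the transfer principle of Theorem \ref{thmmainintro}. Concretely, I would take as observation region on $\mathcal R$ the set $\overline S:=\bigcup_{h=1}^6 K_h(S_\alpha)$, i.e. the neighborhood of the whole tiling skeleton obtained by spreading $S_\alpha$ over the six tiles. Since the six triangles reach $\partial\mathcal R$, this $\overline S$ contains a frame around $\mathcal R$, that is strips of width $\alpha$ along the edges of $\mathcal R$ in \emph{both} coordinate directions (the bottom leg of $\mathcal T$ yields a horizontal strip, the left leg a vertical one); and since the reflections defining the tiling map $S_\alpha$ into itself near each shared edge, folding $\overline S$ back through the $K_h^{-1}$ returns exactly $S=S_\alpha$. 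Theorem \ref{thmmainintro} then reduces the claim to proving
$$c_\alpha\left(\|\overline u_0\|_0^2+\|\overline u_1\|^2_{-1}\right)\leq \int_0^T\int_{\overline S}|\overline u(t,x)|^2\,dx\,dt$$
for the solution $\overline u$ of \eqref{wavegeneralrectintro}, with the very constant $c_\alpha$ and threshold $T_\alpha$ displayed in the statement.

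To establish this rectangle estimate I would expand $\overline u$ in the product eigenbasis $\overline e_k=\sin(\pi k_1x_1/\sqrt3)\sin(\pi k_2 x_2)$ and exploit the tensor structure of the strips. Integrating a strip over the \emph{full} extent of one variable yields, by orthogonality of $\{\sin(\pi k x/\sqrt3)\}$ on $(0,\sqrt3)$ (resp. $\{\sin(\pi k x)\}$ on $(0,1)$), a genuine Parseval identity in the corresponding index, while integrating over the $\alpha$-wide extent of the other variable leaves on the diagonal the factor $\int_0^\alpha\sin^2(\pi kx/\sqrt3)\,dx\geq t_\alpha$ (the $x_1$-strip being the binding one). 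This is exactly where $t_\alpha$ and the normalizing length $\sqrt3$ enter, and it furnishes a \emph{uniform} lower bound for the spatial mass of every mode on the frame. I would then apply the Ingham-type inequality with explicit constants supplied by the technical lemmas of Section \ref{st2} to pass from the time integral $\int_0^T$ to the energy $\|\overline u_0\|_0^2+\|\overline u_1\|_{-1}^2$: its leading term produces the factor $\frac{T}{\pi}\frac{t_\alpha}{\sqrt3}$, while the lower-order remainder of the Ingham estimate is responsible for the subtracted correction $\frac{40}{3T^2}$. The positivity requirement $c_\alpha>0$ is then precisely the condition $T>T_\alpha$ under which this lower bound is effective.

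The main obstacle is the control of the off-diagonal terms together with the degeneration of the spectral gaps. The two-dimensional frequencies $\sqrt{k_1^2/3+k_2^2}$ cluster arbitrarily — for instance $\sqrt{(k_1+1)^2/3+k_2^2}-\sqrt{k_1^2/3+k_2^2}\to0$ as $k_2\to\infty$ with $k_1$ fixed — and the restrictions of the $\overline e_k$ to a single strip are not orthogonal, so no single strip direction can control every mode: a solution concentrated along a vertical billiard trajectory is invisible to a vertical strip, and conversely. The resolution, and the reason the entire frame is needed, is that each mode is captured with a uniform lower bound by at least one of the two strip directions; within each coordinate slice the exact orthogonality in the full variable, combined with the time Ingham-type separation, controls the cross terms, and the slices in which the gap degenerates are handled by the complementary strip. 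Making this splitting quantitative, with constants compatible with the explicit $c_\alpha$, is the delicate point, and it is here that the technical lemmas of Section \ref{st2} do the real work.

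Finally, having secured the rectangle inequality with constant $c_\alpha$ for every $T>T_\alpha$, I would invoke Theorem \ref{thmmainintro} to transfer it to $\mathcal T$, thereby obtaining \eqref{obstripsintro} for all $(u_0,u_1)\in D^0(\mathcal T)\times D^{-1}(\mathcal T)$ with the same constant $c_\alpha$ and the same observation time, which is the assertion.
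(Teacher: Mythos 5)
Your overall architecture coincides with the paper's: establish a one-sided observability estimate on coordinate strips of the rectangle $\mathcal R$, transfer it to $\mathcal T$ by the tiling theorem, and identify the folded observation set with $S_\alpha$. Your push-forward set $\overline S=\bigcup_{h=1}^6 K_h(S_\alpha)$ does fold back to exactly $S_\alpha$ (this is immediate from the disjointness of the tiles $K_h\mathcal T$), and it does essentially contain the paper's L-shaped set $\bar S_\alpha=[(0,\alpha)\times(0,1)]\cup[(0,\sqrt3)\times(0,\alpha)]$ --- but that containment is precisely the nontrivial geometric content of the paper's Lemma \ref{lobs2}, which you assert in one line (``the reflections map $S_\alpha$ into itself near each shared edge'') without the explicit computation with the maps $K_h$ that is needed, for instance to check that every point of $\{x_2<\alpha\}\cap\mathcal R$ lies in the $\alpha$-strip of whichever tile contains it, including near tile junctions and near tiles that meet $\partial\mathcal R$ only in a vertex.

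The genuine gap is in the rectangle step. The paper does not prove the strip observability on $\mathcal R$: it quotes it as Proposition \ref{km} (Proposition 1.1 of \cite{KomMia2013}), and Lemma \ref{lobs1} merely verifies its hypotheses for $\ell_1=\sqrt3$, $\ell_2=1$, $J_1=J_2=(0,\alpha)$, computing $m=\min\{t_\alpha/\sqrt3,\,t'_\alpha\}=t_\alpha/\sqrt3$ and $(\ell_1^2+\ell_2^2)(\ell_1^4+\ell_2^4)/(\ell_1^2\ell_2^2)=40/3$, which is where the stated $c_\alpha$ and $T_\alpha$ come from. You instead propose to re-derive this estimate by a Parseval-plus-Ingham argument; you correctly identify the obstruction (the frequencies $\pi\sqrt{k_1^2/3+k_2^2}$ have no uniform gap, and no single strip direction sees every mode), but your resolution --- that ``the technical lemmas of Section \ref{st2} do the real work'' --- points to lemmas that contain no such analysis: Lemma \ref{lobs1} is an application of the cited proposition and Lemma \ref{lobs2} is the geometric identification. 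The quantitative splitting between the two strip directions, with constants matching the explicit $c_\alpha$, is exactly the content of the external result, and as written your proof of the rectangle inequality is not closed. If you replace that part of your argument by an appeal to the cross-like observability theorem of Komornik and Miara, your proof becomes the paper's, modulo supplying the proof of the geometric lemma.
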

  \begin{remark}
  If $\alpha\leq 1/(3+\sqrt{3})\simeq 0.211$ then $S_\alpha$ can be 
equivalently viewed as the intersection between $\mathcal T$ and the union of 
three open strips $s_1(\alpha)$, $s_2(\alpha)$ and $s_3(\alpha)$ of width equal 
to $\alpha$, each of which has a common side with an edge of $\mathcal T$, see 
Figure \ref{stripsfigintro}. If $\alpha=1/(3+\sqrt{3})$ then we are setting as 
domain of observation $\mathcal T\setminus \{(\alpha,\alpha)\}$:  note that in 
this case the point  $(\alpha,\alpha)$ is the incenter of $\mathcal T$. Finally 
 if $\alpha\geq 1/(3+\sqrt{3})$ then the union of $s_1(\alpha),s_2(\alpha)$ and 
$s_3(\alpha)$ covers $\mathcal T$.
  \end{remark}
\begin{proof}
Let 
$$\bar S_\alpha:=[(0,\alpha)\times(0,1)]\cup[(0,\sqrt{3})\times (0,\alpha)].$$
Using a result in 
\cite{KomMia2013}, we show in Lemma \ref{lobs1} that if $T>T_\alpha$ then the 
solution 
$\overline u$ of \eqref{wavegeneralrectintro} satisfies 
\eqref{obsrectanglegintro} for all initial data $(\overline u_0, \overline 
u_1)\in D^0(\mathcal R)\times D^{-1}(\mathcal R)$. Then, by 
Theorem 
\ref{thmmainintro}, setting 
$$S'_\alpha:= \bigcup_{h=1}^6 (K^{-1}_h \overline S_\alpha \cap \mathcal T)$$
we have that if $u$ is the solution of \eqref{wavegeneraltrintro} then 
$T>T_\alpha$ implies 
$$\|u_0\|_0^2+\|u_1\|^2_{-1}\leq c_\alpha \int_0^T\int_{S'_\alpha} |u(t,x)|^2 
dx.$$
for all  $(u_0,u_1)\in D^0(\mathcal T)\times D^{-1}(\mathcal T)$. The claim 
hence 
follows by showing that $S_\alpha=S'_\alpha$ for all $\alpha\geq0$, which 
is proved in Lemma \ref{lobs2}.
\end{proof}
\subsection{Organization of the paper.} In 
Section \ref{sgeneral} we consider a 
generic domain $\Omega_1$ tiling a larger domain $\Omega_2$:  we establish a 
result, Theorem \ref{thmprol}, relating the observability properties of wave 
equation on $\Omega_2$ and on its tile $\Omega_1$. In Section \ref{st1} we 
specialize this result to the case in which  $\Omega_1$ is the triangle 
$\mathcal T$ and the tiled domain $\Omega_2$ is the rectangle $\mathcal R$: 
this 
is the core of the proof of Theorem \ref{thmmainintro}. Finally Section 
\ref{st2} is devoted to the proof of Theorem \ref{stripsedgeintro}.

\section{An observability result on tilings}\label{sgeneral}
The goal of this section is to state an equivalence between an observability 
problem on a domain $\Omega_1$ and an observability problem on a larger domain 
$\Omega_2$, under the assumption that $\Omega_1$ tiles $\Omega_2$. 
We begin with some definitions.
\begin{definition}[Tilings, foldings and prolongations]
 Let $\Omega_1$ and $\Omega_2$ be two open bounded subsets of $\RR^2$. We say 
that $\Omega_1$ \emph{tiles} $\Omega_2$ if there 
 exists a set $\{K_h\}_{h=1}^N$ rigid transformations of $\RR^2$ 
  such that
 $$cl(\Omega_2)=\bigcup_{h=1}^N K_hcl(\Omega_1).$$
and such that $K_h(\Omega_1)\cap K_j(\Omega_1)=\emptyset$ for all $h\not=j$.
 
 Let $(\Omega_1,\{K_h\}_{h=1}^N)$ be a tiling of $\Omega_2$ and 
$\delta=(\delta_1,\dots,\delta_N)\in\{-1,1\}^N$. The \emph{prolongation with 
coefficients $\delta$} of a function $u:\Omega_1\to \RR$ to $\Omega_2$ is the 
function $\mathcal P_\delta u:\Omega_2\to \RR$ 
  $$\mathcal P_\delta u(K_h x)=\delta_h u(x) \text{ for each }h=1,\dots,N.$$
 The \emph{folding with coefficients $\delta$} of a function $\overline 
u:\Omega_2\to \RR$ is the function $\mathcal F_\delta \overline u:\Omega_1\to 
\RR$ 
  $$\mathcal F_\delta \overline u(x)=\frac{1}{N^2}\sum_{h=1}^N \delta_h 
\overline u(K_h x) \text{ for each }h=1,\dots,N.$$

 A tiling $(\Omega_1,K_h)$ of $\Omega_2$ is \emph{admissible} if there exists 
$\delta\in\{-1,1\}^N$ such that 
  \begin{equation}\label{admdef}
\mathcal F_\delta\varphi \in H^1_0(\Omega_1) \quad \forall \varphi \in 
H_0^1(\Omega_2).
   \end{equation}
\end{definition}
under scripts and we simply write $\mathcal P$ and $\mathcal F$.

 \begin{example}\label{exadm}
  We show in Lemma \ref{ladm} below that the tiling of $\mathcal R$ with 
$\mathcal T$ depicted in Figure \ref{tiling} is admissible, in particular 
\eqref{admdef} holds with $\delta=(1,-1,1,1,-1,1)$.
    
  On the other hand the tiling of $\mathcal R':=(0,1/\sqrt{3})\times(0,1)$ 
given 
by the transformations $K_1':=id$ and $$K_2':(x_1,x_2)\mapsto 
-(x_1,x_2)+(1/\sqrt{3},1),$$
  see Figure \ref{tilingbad}, is not admissible. Let indeed  
$v_1:=(1/\sqrt{3},0)$, 
  $v_2:=(0,1)$ and $x_\lambda:=\lambda v_1 +(1-\lambda)v_2$ with $\lambda\in 
(0,1)$. 
  Then $x_\lambda\in \partial \mathcal T$ and 
  $$K_2(x_{\lambda})=x_{1-\lambda}$$
  Therefore it suffices to choose $\varphi\in H_0^1(\mathcal R)$ such that 
$\varphi(x_\lambda)\not=\pm \varphi(x_{1-\lambda})$ to obtain 
  $$\mathcal F_\delta \varphi(x_\lambda)=\delta_1 \varphi(x_\lambda)+\delta_2 
\varphi(x_{1-\lambda})\not=0$$
  for all $\delta_1,\delta_2\in\{-1,1\}$. Consequently $\mathcal F_\delta 
\varphi \not\in H_0^1(\mathcal T)$ for all $\delta\in\{-1,1\}^2$.
 \end{example}

 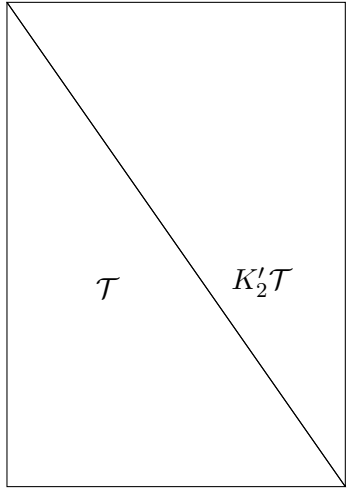
\begin{figure}
 \definecolor{cffffff}{RGB}{255,255,255}

\begin{tikzpicture}[y=0.80pt, x=0.80pt, yscale=-1.000000, xscale=1.000000, inner sep=0pt, outer sep=0pt]
\begin{scope}[cm={{0.65622,0.0,0.0,-0.54271,(349.54106,742.37215)}}]
  \begin{scope}
    \path[fill=cffffff,nonzero rule] (0.0000,0.0000) -- (254.0000,0.0000) --
      (254.0000,432.0000) -- (0.0000,432.0000) -- (0.0000,0.0000) -- cycle;

    \path[draw=cffffff,line join=miter,line cap=butt,miter limit=10.00,line
      width=0.024pt] (0.0000,432.0000) -- (254.0000,432.0000) -- (254.0000,0.0000)
      -- (0.0000,0.0000) -- (0.0000,432.0000) -- cycle;

    \path[draw=black,line join=miter,line cap=rect,miter limit=3.25,line
      width=0.400pt] (5.0780,427.1250) -- (248.8630,4.8750) -- (5.0780,4.8750) --
      (5.0780,427.1250) -- cycle;

    \path[draw=black,line join=miter,line cap=rect,miter limit=3.25,line
      width=0.400pt] (248.8630,4.8750) -- (5.0780,427.1250) -- (248.8630,427.1250)
      -- (248.8630,4.8750) -- cycle;

  \end{scope}
\end{scope}
\path[xscale=1.140,yscale=0.877,fill=black,line join=miter,line cap=butt,line
  width=0.800pt] (346.3738,741.5263) node[above right] (text4200) {$\mathcal
  T$};

\path[xscale=1.140,yscale=0.877,fill=black,line join=miter,line cap=butt,line
  width=0.800pt] (402.7135,741.2077) node[above right] (text4200-3)
  {$K_2'\mathcal T$};

\end{tikzpicture}
 \caption{\label{tilingbad} A non-admissible tiling of $\mathcal 
R'=(0,1/\sqrt{3})\times(0,1)$ with $\mathcal T$. }
\end{figure}
 \begin{remark}
  We borrowed the notion of prolongation and folding from \cite{triangle}: 
while our definition of $\mathcal P_\delta$ is exactly as it is given in 
\cite{triangle}, we introduced a normalizing term $1/N^2$ in the definition of 
$\mathcal F_\delta$ in order to enlighten the notations. Note that the 
following 
equality holds:
 \begin{equation}\label{fold2}
 \mathcal F_\delta(\mathcal P_\delta u)=\frac{1}{N}u  
 \end{equation}
  for all $u:\Omega_1\to\RR$.

  Also remark that we shall need to prolong and fold also functions 
$u:\RR\times \Omega_1\to \RR$ and 
 $\bar u:\RR\times \Omega_2\to \RR$, in this case the definition of $\mathcal 
P$ and $\mathcal F$ naturally extends by applying the transformations $K_h$'s 
to the spatial variables $x$. For instance
 if  $u:\RR\times \Omega_1\to \RR$ then its prolongation to $\RR\times 
\Omega_2$ reads
 $$\mathcal P_\delta u(t,K_h x)=\delta_h u(t,x).$$ 
 \end{remark}
%
%
%

We want to establish a relation between solutions of a wave equation with 
Dirichlet boundary conditions and their prolongation. To this end we introduce 
the notations
$$\mathcal P_\delta L^2(\Omega_1):=\{\mathcal P_\delta u\mid u\in 
L^2(\Omega_1)\},$$
$$\mathcal P_\delta H^1_0(\Omega_1):=\{\mathcal P_\delta u\mid u\in 
H^1_0(\Omega_1)\}$$
and
$$\mathcal P_\delta H^{-1}(\Omega_1):=\{\mathcal P_\delta u\mid u\in 
H^{-1}(\Omega_1)\}.$$

Note that $\mathcal P_\delta L^2(\Omega_1)\subset L^2(\Omega_2)$, $\mathcal 
P_\delta H^1_0(\Omega_1)\subset H^1_0(\Omega_2)$ and $\mathcal P_\delta 
H^{-1}(\Omega_1)\subset H^{-1}(\Omega_1)$ .

All results below hold under the following assumptions on the domains 
$\Omega_1$, $\Omega_2$ and on a base $\{e_k\}$ for $L^2(\Omega_1)$:
\begin{assumption}\label{A1}
$(\Omega_1,\{K_h\}_{h=1}^N)$ is an admissible tiling of $\Omega_2$.
\end{assumption}

\begin{assumption}\label{A2}
$\{e_k\}$ is a base of eigenvectors of $-\Delta$ in $H^1_0(\Omega_1)$, it is 
defined on $\Omega_1\cup \Omega_2$ and there exists $\delta\in\{-1,1\}^N$ such 
that
 $$\mathcal P_\delta (e_k|_{\Omega_1})=e_k|_{\Omega_2}$$ 
 for each $k\in\NN$.
\end{assumption}
\begin{remark}[Some remarks on Assumption \ref{A2}]
 We note that Assumption \ref{A2} can be equivalently stated as
\begin{equation}\label{opdef}
e_k(K_h x)=\delta_h e_k(x)\quad \text{
for all $x\in \Omega_1$, $h=1,\dots,N$, $k\in\NN$.} 
\end{equation}
Indeed,  by definition of prolongation and noting $\delta_h^2\equiv 1$, we have
\begin{equation*}
e_k(K_h x)=\delta_h^2 e_k(K_h x)=\delta_h \mathcal P_\delta e_k(x)=\delta_h 
e_k(x).
\end{equation*}
 for every $x\in \Omega_1$, $h=1,\dots,N$ and $k\in\NN$. 
 \\ Also remark that, in view of \eqref{fold2}, Assumption \ref{A2} also 
implies 
\begin{equation}\label{fold}
\mathcal F_\delta e_k= \frac{1}{N} e_k. 
\end{equation}
\end{remark}
\begin{example}
 Let $\Omega_1=(0,\pi)^2$ and $\Omega_2=(0,2\pi)^2$. Consider the 
transformations of $\RR^2$
$$ \begin{array}{ll}
 K_1:= \text{id}, &~K_2: (x_1,x_2)\mapsto (-x_1+2\pi,x_2),\\
 K_3:(x_1,x_2)\mapsto (x_1,-x_2+2\pi),&~K_4: (x_1,x_2)\mapsto 
-(x_1,x_2)+(2\pi,2\pi)  
 \end{array}.$$
  Then $\{\Omega_1,\{K_h\}_{h=1}^4\}$ is a tiling for $\Omega_2$.  In 
particular, Assumption \ref{A1} is satisfied: indeed setting 
$\delta=(1,-1,-1,1)$ we have for each $\varphi\in H_0^1(\Omega_2)$
  $$\mathcal F_\delta \varphi(x)= 0 \quad \forall x\in \partial \Omega_1.$$
   Also note that the functions
 $$e_k(x):=\sin (k_1 x_1)\sin (k_2x_2)\quad k=(k_1,k_2)\in\NN^2$$
 satisfy Assumption \ref{A2}, indeed they are a base for $L^2(\Omega_1)$ 
composed by eigenfunctions of $-\Delta$ in $H_0^1(\Omega_1)$ and
 $$e_k( K_h x):=\delta_h e_k(x)$$
 for all $x\in \RR^2$, $h=1,\dots,4$ and $k\in \NN^2$. The space 
$\mathcal P_\delta L^2(\Omega_1)$ in this case coincides with the space of 
so-called \emph{$(2,2)$-cyclic functions}, i.e.,  functions in $L^2(\Omega_2)$ 
which are odd with respect to both axes $x_1=\pi$ and $x_2=\pi$.
 We refer to \cite{KomLor159} for some results on observability of wave 
equation 
with $(p,q)$-cyclic initial data. 
\end{example}

\vskip0.5cm
Our starting point is to show that, under Assumptions \ref{A1} and \ref{A2}, 
the base of eigenfunctions $\{e_k\}$ is also a base of eigenfunctions also for 
an appropriate subspace of $L^2(\Omega_2)$, and to compute the associated 
coefficients.  
\begin{lemma}\label{lcr}
 Let $\Omega_1,\Omega_2$ and $\{e_k\}$ satisfy Assumption \ref{A1} and 
Assumption \ref{A2}.
 
%

 Then $\{e_k\}\subset H^1_0(\Omega_2)$ and it is also a complete base for 
$\mathcal P_\delta L^2(\Omega_1)$ formed by eigenfunctions of $-\Delta$ in 
$\mathcal P_\delta H_0^1(\Omega)$.
 
 In particular, for every $k\in \NN$, if $u_k$ is the coefficient of 
 $u\in L^2(\Omega_1)$ (with respect to $e_k$) then $Nu_k$ is the coefficient of 
$\mathcal P_\delta u$. 
 \end{lemma}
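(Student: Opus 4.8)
The plan is to base everything on a single $L^2$-identity expressing that prolongation rescales the inner product by the number of tiles. Concretely, I would first show that for all $u,v\in L^2(\Omega_1)$,
$$\int_{\Omega_2}\mathcal P_\delta u\,\mathcal P_\delta v\,dx=N\int_{\Omega_1}uv\,dx,$$
which follows by splitting $\Omega_2=\bigcup_h K_h(\Omega_1)$, changing variables $x=K_h y$ (each $K_h$ being a rigid motion, hence measure preserving), and using $\mathcal P_\delta u(K_h y)=\delta_h u(y)$ together with $\delta_h^2=1$. In particular $\|\mathcal P_\delta u\|_{L^2(\Omega_2)}^2=N\|u\|_{L^2(\Omega_1)}^2$, so $\mathcal P_\delta$ is bounded from $L^2(\Omega_1)$ into $L^2(\Omega_2)$, and taking $u=e_k$, $v=e_j$ shows that the family $\{e_k|_{\Omega_2}\}=\{\mathcal P_\delta e_k\}$ is orthogonal in $L^2(\Omega_2)$.

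Next I would record that $\{e_k\}\subset H^1_0(\Omega_2)$. By the equivalent form \eqref{opdef} of Assumption \ref{A2} one has $e_k|_{\Omega_2}=\mathcal P_\delta(e_k|_{\Omega_1})$, and since $e_k|_{\Omega_1}\in H^1_0(\Omega_1)$, the inclusion $\mathcal P_\delta H^1_0(\Omega_1)\subset H^1_0(\Omega_2)$ recorded above — this is the step where admissibility, Assumption \ref{A1}, enters — yields $e_k\in H^1_0(\Omega_2)$ at once.

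Then I would verify that each $e_k$ is an eigenfunction of $-\Delta$ with eigenvalue $\gamma_k$ \emph{within} the subspace $\mathcal P_\delta H^1_0(\Omega_1)$, that is, that the weak eigenvalue identity holds against every test function of the form $\bar\varphi=\mathcal P_\delta\psi$, $\psi\in H^1_0(\Omega_1)$. The computation parallels the first step, applied now to the Dirichlet integral: since each $K_h$ is a rigid motion it preserves the Dirichlet form, so splitting over tiles and changing variables gives
$$\int_{\Omega_2}\nabla e_k\cdot\nabla\bar\varphi\,dx=N\int_{\Omega_1}\nabla e_k\cdot\nabla\psi\,dx.$$
Using that $e_k$ solves the eigenvalue equation on $\Omega_1$, the right-hand side equals $N\gamma_k\int_{\Omega_1}e_k\psi\,dx$, and by the first identity this is $\gamma_k\int_{\Omega_2}e_k\bar\varphi\,dx$, which is the desired relation. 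For completeness of $\{e_k|_{\Omega_2}\}$ in $\mathcal P_\delta L^2(\Omega_1)$ I would take an arbitrary $\mathcal P_\delta u$, expand $u=\sum_k u_k e_k$ in $L^2(\Omega_1)$, and push the bounded operator $\mathcal P_\delta$ through the series to obtain $\mathcal P_\delta u=\sum_k u_k\,\mathcal P_\delta e_k$ with convergence in $L^2(\Omega_2)$; together with the orthogonality from the first step, this makes $\{e_k|_{\Omega_2}\}$ a complete orthogonal base of $\mathcal P_\delta L^2(\Omega_1)$.

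Finally, the coefficient statement is the first identity read off with $v=e_k$: the coefficient of $\mathcal P_\delta u$ relative to $e_k$ equals $\int_{\Omega_2}\mathcal P_\delta u\,e_k\,dx=\int_{\Omega_2}\mathcal P_\delta u\,\mathcal P_\delta e_k\,dx=N\int_{\Omega_1}u\,e_k\,dx=Nu_k$. I expect the only genuinely delicate point to be the eigenfunction-in-the-subspace claim: one must resist asserting that $e_k$ is an eigenfunction of $-\Delta$ on all of $H^1_0(\Omega_2)$, since its normal derivative may jump across the interfaces between neighbouring tiles; rather, those jumps are invisible to test functions carrying the same sign pattern $\delta$, which is precisely why the variational identity survives on $\mathcal P_\delta H^1_0(\Omega_1)$ but not on the whole of $H^1_0(\Omega_2)$. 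The role of Assumption \ref{A1} is thus confined to securing $e_k\in H^1_0(\Omega_2)$, while Assumption \ref{A2} serves only to identify $e_k|_{\Omega_2}$ with $\mathcal P_\delta e_k$.
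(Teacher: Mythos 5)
Your computations for the $L^2$ part are essentially the paper's: the identity $\int_{\Omega_2}\mathcal P_\delta u\,\mathcal P_\delta v\,dx=N\int_{\Omega_1}uv\,dx$ obtained by splitting over tiles, changing variables and using $\delta_h^2=1$ is exactly the paper's ``Claim 2'', and it correctly yields both the coefficient statement and completeness (your way of getting completeness, pushing the bounded operator $\mathcal P_\delta$ through the expansion of $u$, is slightly more direct than the paper's argument by vanishing coefficients, but both are fine).

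The genuine divergence, and the point where your proof is weaker than it should be, is the eigenfunction claim. You only verify the variational identity against test functions of the form $\mathcal P_\delta\psi$, and you explicitly assert that one ``must resist'' proving it against all of $H_0^1(\Omega_2)$ because of possible normal-derivative jumps across tile interfaces. Under Assumptions \ref{A1} and \ref{A2} this caution is misplaced: the paper's ``Claim 1'' proves the identity for \emph{every} $\varphi\in H_0^1(\Omega_2)$, and the mechanism is precisely the admissibility hypothesis \eqref{admdef}, which you have relegated to a cosmetic role. Given an arbitrary $\varphi\in H_0^1(\Omega_2)$, decomposing $\int_{\Omega_2}\nabla e_k\cdot\nabla\varphi\,dx$ over the tiles, changing variables by the rigid motions $K_h$ and using \eqref{opdef} collects the test function into $\sum_h\delta_h\,\varphi\circ K_h$, i.e.\ (up to the normalization) into $\mathcal F_\delta\varphi$; Assumption \ref{A1} guarantees $\mathcal F_\delta\varphi\in H_0^1(\Omega_1)$, so the eigenvalue equation on $\Omega_1$ applies, and unfolding gives $\gamma_k\int_{\Omega_2}e_k\varphi\,dx$. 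In other words, the interface jumps you worry about are exactly what admissibility rules out in the weak sense. This is not a pedantic distinction: the stronger statement is what Lemma \ref{lprol} uses when it identifies $N u|_{\Omega_2}$ with \emph{the} solution of the Dirichlet wave equation on $\Omega_2$ and expands it in the genuine Dirichlet eigenfunctions of $\Omega_2$; with only your subspace-relative eigenfunction property that identification does not follow. You should therefore replace your test-function class $\mathcal P_\delta H_0^1(\Omega_1)$ by all of $H_0^1(\Omega_2)$ and run the folding argument above; the rest of your proof can stand as written.
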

\begin{proof} The proof is organized two steps. \\
\emph{Claim 1: $\{e_k\}$ is a set of eigenfunctions of $-\Delta$ in 
$H_0^1(\Omega_2)$}. Extending a result given in \cite{triangle}, we need to 
show that, under Assumption \ref{A1} and Assumption \ref{A2}, if $e_k\in 
H_0^1(\Omega_1)$ is a solution of the boundary value problem 
$$ \int_{\Omega_1} \nabla e_k\nabla \varphi dx =\int_{\Omega_1} \gamma_k e_k 
\varphi dx \quad \forall \varphi \in H_0^1(\Omega_1)
$$
for some $\gamma_k\in\RR$, then $e_k$ is also solution of the boundary value 
problem on $\Omega_2$
$$ \int_{\Omega_2} \nabla  e_k \nabla \varphi dx =\int_{\Omega_2} \gamma_k e_k 
\varphi dx \quad \forall \varphi \in H_0^1(\Omega_2).
$$

Now, recall from  Assumption \ref{A1} that if $\varphi \in H_1^0(\Omega_2)$ 
then 
$\mathcal F_\delta \varphi\in H_1^0(\Omega_1)$.  Then it follows again from 
Assumption \ref{A1} and from Assumption \ref{A2} (in particular by recalling 
that $K_h$'s are isometries and \eqref{opdef}) that for all $\varphi \in 
H_0^1(\Omega_2)$
\begin{align*}
 \int_{\Omega_2} \nabla  e_k(x)&\nabla \varphi(x) dx= \int_{\bigcup_{h=1}^N K_h 
\Omega_1} 
 \nabla  e_k(x) \nabla \varphi(x) dx\\
&=\sum_{h=1}^N \int_{\Omega_1} \nabla  e_k(K_h x) \nabla \varphi(K_h x) dx = 
\int_{\Omega_1}  \nabla  e_k(x) \sum_{h=1}^N \delta_h \nabla \varphi(K_h x) dx\\
&= \int_{\Omega_1}\nabla e_k(x) \nabla \mathcal F_\delta \varphi(x) dx= 
\int_{\Omega_1} \gamma_k  e_k(x) \mathcal F_\delta \varphi(x) dx\\
&= \int_{\Omega_2} \gamma_k  e_k(x) \varphi(x) dx.
\end{align*}
and this completes the proof of Claim 1. 

\emph{Claim 2: completeness of $\{e_k\}$ and computation of coefficients} By 
Assumption \ref{A1} and Assumption \ref{A2} and by recalling $\delta_h^2=1$ for 
each $h=1,\dots,N$, we have
\begin{align*}
\int_{\Omega_2} \mathcal P_\delta u(x) e_k(x)dx&=\int_{\Omega_2} \mathcal 
P_\delta u(x) \mathcal P_\delta e_k(x)dx\\
&=
\sum_{h=1}^N\int_{K_h \Omega_1} \mathcal P_\delta u(x) \mathcal Pe_k(x)dx\\
&=\sum_{h=1}^N\int_{K_h \Omega_1} \delta_h^2 u(K_h x) e_k(K_h x)dx\\
&=\sum_{h=1}^N\int_{\Omega_1}  u(x) e_k(x)dx=N\int_{\Omega_1}  u(x) e_k(x)dx,
\end{align*}
where the second to last equality holds because $K_h$'s are rigid 
transformations. Then we may deduce two facts: first if $\{u_k\}$ are the 
coefficients of $u\in L^2(\Omega_1)$ then $\{N u_k\}$ are coefficients of 
$\mathcal P_\delta u$. Secondly, $\{e_k\}$ is a complete base for $\mathcal 
P_\delta L^2(\Omega_1)$, indeed if the coefficients of $\mathcal P_\delta u$ 
are 
identically null, then also the coefficients of $u$ are identically null: since 
$\{e_k\}$ is complete for $\Omega_1$ then $u\equiv 0$ and, consequently, 
$\mathcal P_\delta u\equiv 0$, as well. 
$\Omega_2$ and, 
consequently, $\partial \Omega_2\subset \bigcup_{h=1}^N K_h(\partial 
\Omega_1)$.
\end{proof}
Next result establishes a relation between solutions of wave equations on tiles 
and their prolongations.
\begin{lemma}\label{lprol} 
 Let $\Omega_1,\Omega_2$ and $\{e_k\}$ satisfy Assumption \ref{A1} and 
Assumption \ref{A2}. 
 Let $u$ be the solution of
 \begin{equation}\label{wavegeneral}
 \begin{cases}
  u_{tt}-\Delta u=0& \text{in }\RR\times \Omega_1\\
  u=0&\text{on }\RR\times \partial \Omega_1\\
  u(t,0)=u_0,~u_t(t,0)=u_1&\text{in }\Omega_1
 \end{cases}
\end{equation}
Then $u$ is well defined in $\Omega_1\cup\Omega_2$ and $\overline u=N 
u|_{\Omega_2}$ is the solution of 
  \begin{equation}\label{wavegeneral2}
 \begin{cases}
  \overline u_{tt}-\Delta \overline u=0& \text{on }\RR\times \Omega_2\\
  \overline u=0&\text{in }\RR\times \partial \Omega_2\\
  \overline u(t,0)=\mathcal P_\delta u_0,~\overline u(t,0)=\mathcal P_\delta 
u_1&\text{in }\Omega_2
 \end{cases}
\end{equation}
Conversely, if $\bar u$ is the solution of \eqref{wavegeneral2} then $\mathcal 
F_\delta \bar u$ is the solution of  \eqref{wavegeneral} and for every 
$h=1,\dots,N$ 
\begin{equation}\label{sol}
\mathcal F_\delta \bar u(t,x)= \frac{\delta_h}{N} \bar u(t,K_h x)\quad 
\text{for each $x\in \Omega_1$}. 
\end{equation}

\end{lemma}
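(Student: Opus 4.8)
The plan is to expand everything in the eigenbasis $\{e_k\}$ and to move between coefficients on $\Omega_1$ and on $\Omega_2$ using Lemma \ref{lcr}. Writing $u_{0,k}$ and $u_{1,k}$ for the coefficients of $u_0$ and $u_1$ with respect to $\{e_k\}$, the solution of \eqref{wavegeneral} is
$$u(t,x)=\sum_k c_k(t)\, e_k(x),\qquad c_k(t):=u_{0,k}\cos(\sqrt{\gamma_k}\,t)+\frac{u_{1,k}}{\sqrt{\gamma_k}}\sin(\sqrt{\gamma_k}\,t).$$
Since Assumption \ref{A2} guarantees that each $e_k$ is defined on all of $\Omega_1\cup\Omega_2$, this same series makes sense for $x\in\Omega_1\cup\Omega_2$ and thereby extends $u$, which is the asserted well-definedness. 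Evaluating the extension at $x=K_hy$ with $y\in\Omega_1$ and using \eqref{opdef} gives $u(t,K_hy)=\sum_k c_k(t)\,\delta_h e_k(y)=\delta_h u(t,y)$, so the extension of $u$ to $\Omega_2$ coincides with $\mathcal P_\delta u$.

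For the direct statement I would show that $\overline u:=N\,u|_{\Omega_2}$ solves \eqref{wavegeneral2}. By Lemma \ref{lcr} the family $\{e_k\}$ is a complete base of $\mathcal P_\delta L^2(\Omega_1)$ consisting of eigenfunctions of $-\Delta$ on $\Omega_2$ with the \emph{same} eigenvalues $\gamma_k$, and the coefficients of the prolonged data $\mathcal P_\delta u_0$ and $\mathcal P_\delta u_1$ are $N u_{0,k}$ and $N u_{1,k}$. Because the initial data of \eqref{wavegeneral2} lie in $\mathcal P_\delta L^2(\Omega_1)=\overline{\mathrm{span}}\{e_k\}$ and the wave flow leaves the closed span of a set of eigenfunctions invariant, the solution of \eqref{wavegeneral2} keeps the form $\sum_k \overline c_k(t)\, e_k$; comparing coefficients and eigenvalue data yields $\overline c_k(t)=N c_k(t)$, so $\overline u=N\sum_k c_k(t)e_k=N\,u|_{\Omega_2}$.

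The converse then follows cheaply from the folding and prolongation algebra. From the previous step $\overline u=N\,u|_{\Omega_2}=N\,\mathcal P_\delta u$, so by linearity of $\mathcal F_\delta$ and the identity \eqref{fold2},
$$\mathcal F_\delta\overline u=N\,\mathcal F_\delta(\mathcal P_\delta u)=N\cdot\tfrac1N\,u=u,$$
which is exactly the solution of \eqref{wavegeneral}. Formula \eqref{sol} drops out of the same computation: for $x\in\Omega_1$ we have $\overline u(t,K_hx)=N\,\mathcal P_\delta u(t,K_hx)=N\delta_h u(t,x)$, hence $\tfrac{\delta_h}{N}\overline u(t,K_hx)=\delta_h^2 u(t,x)=u(t,x)=\mathcal F_\delta\overline u(t,x)$.

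The step that needs the most care is the low regularity of the data: for $(u_0,u_1)\in D^0(\Omega_1)\times D^{-1}(\Omega_1)=L^2(\Omega_1)\times H^{-1}(\Omega_1)$ the series above converge only in $C(\RR;L^2)\cap C^1(\RR;H^{-1})$, so the pointwise manipulations with $e_k(K_hx)$ must first be carried out for finite (smooth) data, where everything is classical, and then propagated to general data by density together with the continuity of the solution maps of \eqref{wavegeneral} and \eqref{wavegeneral2}. The only genuine verification needed is the invariance used above --- that the solution of \eqref{wavegeneral2} remains in $\mathcal P_\delta L^2(\Omega_1)$ --- and this is immediate once Lemma \ref{lcr} is in hand, since the prolonged data lie in that closed eigenfunction span and the wave propagator commutes with the spectral projections of $-\Delta$.
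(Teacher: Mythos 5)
Your proposal is correct and follows essentially the same route as the paper's proof: expand the solution in the eigenbasis $\{e_k\}$, use Lemma \ref{lcr} to identify the coefficients of the prolonged data as $N$ times those of the original data, and then obtain the converse and the identity \eqref{sol} from the folding/prolongation algebra, i.e.\ from \eqref{fold2}, \eqref{fold} and \eqref{opdef}. The only differences are cosmetic (real trigonometric versus complex exponential form of the time factors, and deriving the converse from the direct implication plus uniqueness rather than recomputing $\mathcal F_\delta \bar u$ termwise), and your explicit remarks on the invariance of the span of $\{e_k\}$ under the flow and on the density argument for low-regularity data only make explicit what the paper leaves implicit.
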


\begin{proof}
 Let $\{\gamma_k\}$ be the sequence of eigenvalues associated to $\{e_k\}$ and 
set $\omega_k=\sqrt{\gamma_k}$, for every $k\in\NN$. Expanding $u(t,x)$ with 
respect to $e_k$ we obtain 
 $$u(t,x)=\sum_{k=1}^\infty (a_k e^{i\omega_k t}+b_k e^{-i\omega_k t})e_k(x)$$
with $a_k$ and $b_k$  depending only the coefficients $c_k$ and $d_k$ of 
$u_0$ and $u_1$ with respect to $\{e_k\}$.  In particular $a_k+b_k=c_k$ and 
$a_k-b_k=-id_k/\omega_k$.
We then have that the natural domain of $u$ coincides with the one of 
$\{e_k\}$'s, hence it is included in $\Omega_1\cup\Omega_2$. 
 By Lemma \ref{lcr}  the coefficients of $\mathcal P_\delta u_0$ and $\mathcal 
P_\delta u_1$ are $Nc_k$ and
$N d_k$, respectively. Then it is immediate to verify that
 $$N u(t,x)=\sum_{k=1}^\infty (N a_k e^{i\omega_k t}+ N b_k e^{-i\omega_k 
t})e_k(x)$$
is the solution of \eqref{wavegeneral2}. 

Now, let 
$$\bar u(t,x)= \sum_{k=1}^\infty (\bar a_k e^{i\omega_k t}+ \bar b_k 
e^{-i\omega_k t}) e_k(x)$$
be the solution of \eqref{wavegeneral2}, and note that, by the reasoning above, 
setting $a_k:=\frac{1}{N}\bar a_k$ and $b_k:=\frac{1}{N}\bar b_k$ we have that 
$$u(t,x):=\sum_{k=1}^\infty ( a_k e^{i\omega_k t}+ b_k e^{-i\omega_k t}) 
e_k(x)=\frac{1}{N}\bar u(t,x)$$
is the solution of \eqref{wavegeneral}. Hence to prove that $u(t,x)=\mathcal 
F_\delta \bar u(t,x)$ it 
it suffices to note that by Assumption \ref{A1} (see in particular 
\eqref{fold}) 
\begin{align*}
\mathcal F_\delta \bar u(t,x)&=\sum_{k=1}^\infty (\bar a_k e^{i\omega_k t}+ 
\bar b_k e^{-i\omega_k t}) \mathcal F_\delta 
e_k(x)\\
&=\frac{1}{N}\sum_{k=1}^\infty (\bar a_k e^{i\omega_k t}+ \bar b_k 
e^{-i\omega_k t}) e_k(x)= \frac{1}{N}\bar u(t,x).
\end{align*}
Finally, we show \eqref{sol}: for each $h=1,\dots,N$ we have
\begin{align*}
\bar u(t,x)&=\delta_h^2 \bar u(t,x) =\delta_h \sum_{k=1}^\infty (\bar a_k 
e^{i\omega_k t}+ \bar b_k e^{-i\omega_k t})  \delta _h e_k(x)\\
&=\sum_{k=1}^\infty (\bar a_k e^{i\omega_k t}+ \bar b_k e^{-i\omega_k t}) 
e_k(K_h x)=\delta_h \bar u(t,K_h x)
\end{align*}
and this concludes the proof.
\eqref{wavegeneral2}.
\end{proof}
We are now in position to state the main result of this section, that bridges 
observability of tiles
with their prolongations.
\begin{theorem}\label{thmprol}
  Let $\Omega_1,\Omega_2$ and $\{e_k\}$ satisfy Assumption \ref{A1} and 
Assumption \ref{A2}. 
 Let $u$ be the solution of
 \begin{equation}\label{wavegeneralprol}
 \begin{cases}
  u_{tt}-\Delta u=0& \text{on }\RR\times \Omega_1\\
  u=0&\text{in }\RR\times \partial \Omega_1\\
  u(t,0)=u_0,~u_t(t,0)=u_1&\text{in }\Omega_1
 \end{cases}
\end{equation}
with $u_0,u_1\in D^0(\Omega_1)\times D^{-1}(\Omega_1)$ and let $\overline u$ be 
the solution of
 \begin{equation}\label{wavegeneralprol2}
 \begin{cases}
  u_{tt}-\Delta u=0& \text{on }\RR\times \Omega_2\\
  u=0&\text{in }\RR\times \partial \Omega_2\\
  u(t,0)=\mathcal P_\delta u_0,~u_t(t,0)=\mathcal P_\delta u_1&\text{in 
}\Omega_2.
 \end{cases}
\end{equation}
 Also let $\bar S\subset \Omega_2$ and define 
 $$S:=\bigcup_{h=1}^N K_h^{-1}\overline S \cap \Omega_1.$$
 Then for every $T>0$ and for every couple $c=(c_1,c_2)$ of positive constants, 
the inequalities
 \begin{equation}\label{c1}
\|u_0\|_0^2+\|u_1\|^2_{-1}\asymp_c \int_0^T\int_{S} |u(t,x)|^2 dx. 
\end{equation}
hold if and only if 
 \begin{equation}\label{c2}
\|\mathcal P_\delta u_0\|_0^2+\|\mathcal P_\delta u_1\|^2_{-1}\asymp_c 
\int_0^T\int_{\overline S} |\overline u(t,x)|^2 dx. 
\end{equation}
\end{theorem}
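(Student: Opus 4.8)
The plan is to deduce the equivalence \eqref{c1} $\iff$ \eqref{c2} by showing that, under the prolongation, each of the two sides of \eqref{c2} is a fixed positive multiple of the corresponding side of \eqref{c1}, \emph{with the same multiplicative constant on both sides}; the couple $c=(c_1,c_2)$ is then automatically preserved, and the chain $c_1(\cdot)\le(\cdot)\le c_2(\cdot)$ transfers verbatim.

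First I would treat the left-hand (energy) sides. By Lemma \ref{lcr} the passage from $u$ on $\Omega_1$ to $\mathcal P_\delta u$ on $\Omega_2$ rescales the coefficients by $N$, while the eigenvalues $\gamma_k$ are left unchanged; hence the norms $\norm{\mathcal P_\delta u_0}_0$ and $\norm{\mathcal P_\delta u_1}_{-1}$ are a fixed power of $N$ times $\norm{u_0}_0$ and $\norm{u_1}_{-1}$, so that $\norm{\mathcal P_\delta u_0}_0^2+\norm{\mathcal P_\delta u_1}_{-1}^2$ and $\norm{u_0}_0^2+\norm{u_1}_{-1}^2$ are proportional with a constant depending only on $N$.

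Next the right-hand (observation) sides. Using the tiling I would write $\overline S=\bigcup_{h=1}^N(\overline S\cap K_h\Omega_1)$ as an essentially disjoint decomposition, so that $\int_{\overline S}\abs{\overline u}^2=\sum_{h=1}^N\int_{\overline S\cap K_h\Omega_1}\abs{\overline u(t,x)}^2\,dx$. In each term I change variables $x=K_h y$; since the $K_h$ are isometries the Jacobian is $1$ and the domain becomes $K_h^{-1}\overline S\cap\Omega_1$. By \eqref{sol} one has $\abs{\overline u(t,K_h y)}^2=N^2\abs{u(t,y)}^2$ for $y\in\Omega_1$, uniformly in $h$, whence $\int_{\overline S}\abs{\overline u}^2=N^2\sum_{h=1}^N\int_{K_h^{-1}\overline S\cap\Omega_1}\abs{u}^2$. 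It then remains to recognize this sum as $\int_S\abs{u}^2$ for $S=\bigcup_{h=1}^N(K_h^{-1}\overline S\cap\Omega_1)$, after which the same power of $N$ produced by Lemma \ref{lcr} on the energy side cancels against the one produced here, giving exactly \eqref{c2} with the same $c$.

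The main obstacle is precisely this last identification. A priori the pulled-back pieces $K_h^{-1}\overline S\cap\Omega_1$ need not be disjoint in $\Omega_1$, and then $\sum_{h}\int_{K_h^{-1}\overline S\cap\Omega_1}$ would over-count the overlaps relative to $\int_S$, spoiling the exact matching of constants. The point to check is therefore that these pieces meet only in a set of measure zero, i.e. that the covering multiplicity equals one, so that the sum collapses exactly to $\int_S\abs{u}^2$ and $\int_0^T\int_{\overline S}\abs{\overline u}^2=N^2\int_0^T\int_S\abs{u}^2$; this is the step that must be verified in each concrete application, and it is what Lemma \ref{lobs2} establishes (through the set identity $S_\alpha=S'_\alpha$) in the proof of Theorem \ref{stripsedgeintro}. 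Once it holds, both inequalities in \eqref{c2} reduce to the corresponding inequalities in \eqref{c1} with the very same pair $c=(c_1,c_2)$, and the one-sided versions with $\leq_c$ and $\geq_c$ follow by retaining only the relevant half of each chain.
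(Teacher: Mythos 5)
Your argument follows the paper's own proof almost verbatim: the energy norms are handled by Lemma \ref{lcr} (which gives $\|\mathcal P_\delta u_0\|_0^2=N^2\|u_0\|_0^2$ and $\|\mathcal P_\delta u_1\|_{-1}^2=N^2\|u_1\|_{-1}^2$), and the observation integral is handled by decomposing $\overline S$ over the tiles $K_h\Omega_1$, changing variables, and invoking the identity $\overline u(t,K_h x)=N\delta_h u(t,x)$ from Lemma \ref{lprol}, which yields $\int_0^T\int_{\overline S}|\overline u|^2\,dx\,dt=N^2\sum_{h=1}^N\int_0^T\int_{S_h}|u|^2\,dx\,dt$ with $S_h:=K_h^{-1}\overline S\cap\Omega_1$. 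Up to this point both arguments are sound and identical in substance.

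The obstacle you single out --- whether $\sum_h\int_{S_h}|u|^2$ collapses to $\int_S|u|^2$ --- is a genuine gap, and it is present in the paper's proof as well: the paper simply asserts that the union $S=\bigcup_h S_h$ is disjoint, which is false for a general $\overline S\subset\Omega_2$ (take $\overline S=\Omega_2$: then $S_h=\Omega_1$ for every $h$ and the sum over-counts by a factor of $N$). Your proposed remedy does not close the gap either: you defer the multiplicity-one check to the application and claim Lemma \ref{lobs2} establishes it, but that lemma only proves the set identity $S_\alpha=\bigcup_h S_h$, not that the pieces are a.e.\ disjoint --- and in fact they are not (the sets $A_1$ and $A_2$ in its proof overlap on a set of positive measure near the vertices of $\mathcal T$, and the pieces with $h=2,3,5,6$ are further contained in their union). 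What survives without disjointness is the two-sided bound $\int_S|u|^2\le\sum_h\int_{S_h}|u|^2\le N\int_S|u|^2$, so the direct (upper) inequality transfers with the same constant $c_2$, while the observability (lower) inequality transfers with $c_1/N$ in place of $c_1$; the qualitative equivalence and the observation time are unaffected, but the exact preservation of the couple $c=(c_1,c_2)$ claimed in the statement requires either adding the a.e.-disjointness of the $S_h$ as a hypothesis or accepting this loss of a factor $N$.
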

\begin{proof}
 By Lemma \ref{lprol}, $u$ and $\overline u$ satisfy
 $$u(t,x)=\frac{\delta_h}{N}\overline u(t,K_h x)\quad \text{ for all 
}h=1,\dots,N.$$
  Since $\Omega_1$ tiles $\Omega_2$, then setting $S_h:= K_h^{-1}\bar S\cap 
\Omega_1$ we have $S=\bigcup_{h=1}^N S_h$ and $\overline S=\bigcup_{h=1}^N K_h 
S_h$, and that these unions are disjoint. Hence, also recalling 
$|\delta_h|\equiv 1$  we have
 \begin{align*}
\int_I\int_{\overline S} |\overline u(t,x)|^2 dx&= \sum_{h=1}^N\int_I\int_{K_h 
S_h} |\overline u(t,x)|^2 dx\\
&= \sum_{h=1}^N\int_I\int_{S_h} |\overline u(t,K_h x)|^2 dx\\
&= N^2\sum_{h=1}^N\int_I\int_{S_h} |\frac{\delta_h}{N}\overline u(t,K_h x)|^2 
dx\\
&= N^2\sum_{h=1}^N\int_I\int_{S_h} |u(t, x)|^2 dx\\
&= N^2\int_I\int_{S} |u(t, x)|^2 dx\\
\end{align*}
Finally, by Lemma \ref{lcr}  
 $$\|\mathcal P_\delta u_0\|^2_0= N^2\| u_0\|_0^2\quad\text{and}\quad 
\|\mathcal P_\delta u_1\|^2_{-1}=N^2\| u_1\|_{-1}^2.$$
and this implies the equivalence between \eqref{c1} and \eqref{c2}.
\end{proof}

\section{Proof of Theorem \ref{thmmainintro}}\label{st1}
The proof of Theorem \ref{thmmainintro} is based on the application of Theorem 
\ref{thmprol} to the particular case 
$$\Omega_1=\mathcal T \quad \text{and}\quad \Omega_2=\mathcal R.$$
$2/\sqrt{3}$)
as well as
side $2/\sqrt{3}$. 
wave equation in $\mathcal T$ bridge the well-established solutions in the 
rectangle $\mathcal R$ to the ones with domain  equal to the rhombus or the 
hexagon.

We then need to admissibly tile $\mathcal R$ with $\mathcal T$ and a base 
$\{e_k\}$ formed by the eigenfunctions of $-\Delta$ in $H_0^1(\mathcal T)$ 
satisfying Assumption \ref{A2}. Such ingredients are provided in 
\cite{triangle}: in order to introduce them we need some notations. We consider 
the Pauli matrix
$$\sigma_z:=\begin{pmatrix}
             1&0\\
             0&-1\\
            \end{pmatrix}$$
and the rotation matrix 
$$R_\alpha:=\begin{pmatrix}
             \cos \alpha &\sin\alpha\\
             -\sin\alpha&\cos\alpha\\
            \end{pmatrix}$$
where $\alpha:=\pi/3$. Now let $v_1:=(0,1/\sqrt{3})$ and $v_2:=(0,1)$ be two of 
the three vertices of $\mathcal T$ and define the transformations from 
$\RR^2$ onto itself
\begin{equation}\label{kdef}
\small{\begin{array}{ll}
 K_1:=id;\quad &\quad K_4:x\mapsto- R_\alpha(x-v_2)+ 
3v_1\\
 K_2: x\mapsto -R_\alpha \sigma_z(x-v_2)+v_2;&\quad 
K_5:  x\mapsto -R_\alpha(x-v_2)+3 v_1+v_2\\
 K_3: x\mapsto R_\alpha(x-v_2)+v_2;&\quad K_6: x\mapsto 
-x+3 v_1+v_2
\end{array}}
\end{equation}
and note $(\mathcal T, \{K_h\}_{h=1}^6)$ is a tiling for $\mathcal R$. Indeed 
\begin{equation}\label{a1triangle}
cl( \mathcal R)=\bigcup_{h=1}^6 K_h cl(\mathcal T),
\end{equation}
 and the sets $K_h \mathcal T$, for $h=1,\dots,6$, do not overlap -- see Figure 
\ref{folding} and \cite{triangle}. 
 
\begin{figure}
 \includegraphics[scale=0.6]{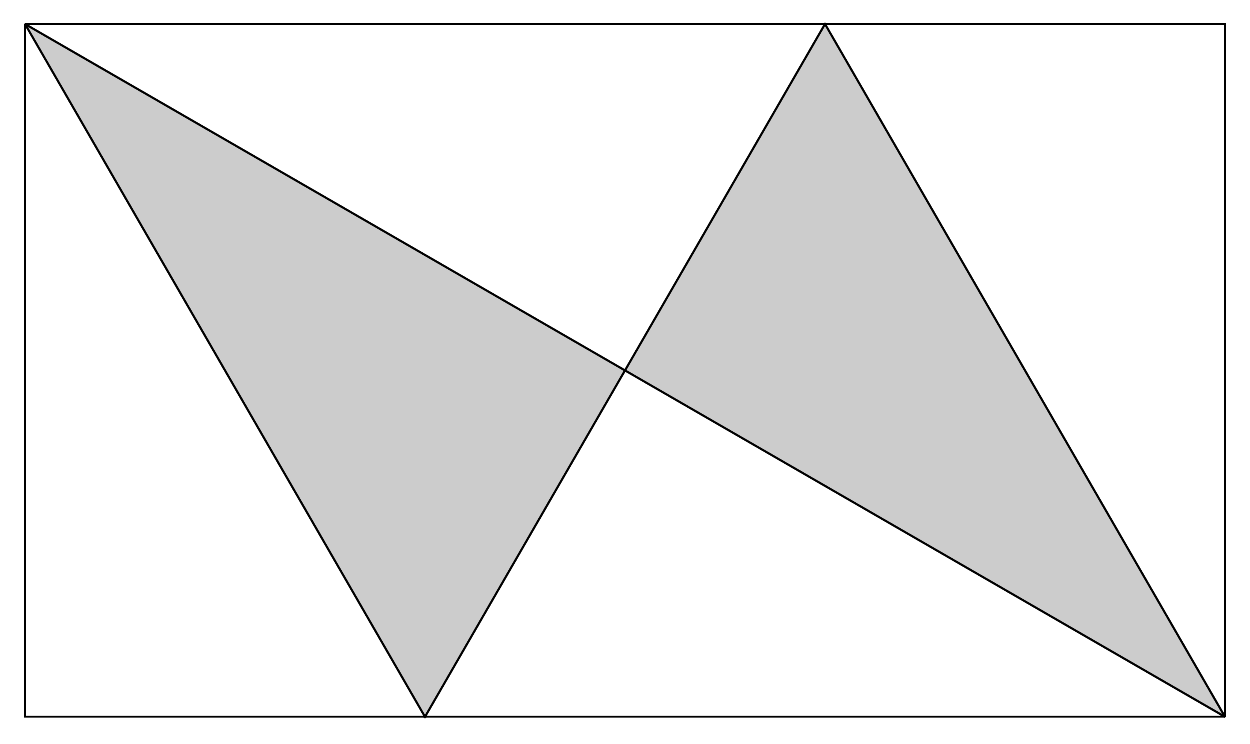}
 \caption{The tiling of $\mathcal R$ with $\mathcal T$, the gray areas 
correspond to negative $\delta_h$'s.  \label{folding}}\end{figure}
We set 
$$\delta:=(1,-1,1,1,-1,1).$$ 
and, in next result, we prove that $\mathcal T$ admissibly tiles $\mathcal R$. 
 
 \begin{lemma}\label{ladm}
  $(\mathcal T,\{K_h\}_{h=1}^6)$ is an admissible tiling of $\mathcal R$.
 \end{lemma}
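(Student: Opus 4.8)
The plan is to verify the admissibility condition \eqref{admdef} directly with the prescribed sign vector $\delta=(1,-1,1,1,-1,1)$, i.e.\ to prove that $\mathcal F_\delta\varphi\in H^1_0(\mathcal T)$ for every $\varphi\in H^1_0(\mathcal R)$. Regularity is immediate: each $K_h$ is a rigid motion with $K_h\mathcal T\subset\mathcal R$, so $\varphi\circ K_h\in H^1(\mathcal T)$, and hence the finite combination $\mathcal F_\delta\varphi=\frac1{36}\sum_{h=1}^6\delta_h\,\varphi\circ K_h$ already lies in $H^1(\mathcal T)$. The whole content is therefore to show that its trace vanishes on $\partial\mathcal T$. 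Fixing an edge $e$ of $\mathcal T$ and a point $x\in e$, the trace value is $\frac1{36}\sum_h\delta_h\varphi(K_h x)$, and the six points $K_h x$ lie on the congruent images $K_h e$ of $e$ in the skeleton of the tiling \eqref{a1triangle}. Since $\varphi$ ranges over all of $H^1_0(\mathcal R)$, whose only forced constraint is $\varphi|_{\partial\mathcal R}=0$, this sum vanishes for every such $\varphi$ precisely when, at each point $K_h x$ lying in the interior of $\mathcal R$, the corresponding coefficients $\delta_h$ cancel (points on $\partial\mathcal R$ contribute nothing).

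I would then exploit the reflective structure of the tiling. The key observation is an involutive pairing: for a fixed edge $e$ of $\mathcal T$, the type-$e$ edge $K_h e$ of the tile $K_h\mathcal T$ is either contained in $\partial\mathcal R$, in which case $\varphi(K_h x)=0$, or it is interior, in which case $K_h\mathcal T$ abuts a unique neighbour $K_j\mathcal T$ along $K_h e$ as its mirror image across the line supporting $K_h e$. In the latter case $K_j^{-1}K_h$ is exactly the reflection fixing that line pointwise, so $K_h x=K_j x$ for all $x\in e$, and the two terms combine as $(\delta_h+\delta_j)\varphi(K_h x)$. Thus the interior images organize into mirror pairs, and it suffices that within every such pair the two signs are opposite. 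Geometrically this amounts to the two facts that adjacent tiles meet as reflections across their common edge, and that $\delta$ realizes the (unique) $2$-colouring of the adjacency graph of the six tiles — the two tiles disjoint from $\partial\mathcal R$ receiving $\delta=-1$ and the four boundary-touching tiles receiving $\delta=+1$ — so that every interior edge joins tiles of opposite sign.

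Concretely I would read these pairings off the explicit maps \eqref{kdef}, writing each $K_h$ as a product of the rotation $R_{\pi/3}$ and the reflection $\sigma_z$ followed by a translation. Applying $K_h$ to each of the three edges of $\mathcal T$ (the two legs, which lie on $\partial\mathcal R$, and the interior hypotenuse), one determines which images land on $\partial\mathcal R$ and identifies, for each interior image, the neighbouring tile and the reflection $K_j^{-1}K_h$ relating them; one then checks that in $\delta=(1,-1,1,1,-1,1)$ the two entries are opposite on every such pair. Running this through all three edges yields a zero trace on all of $\partial\mathcal T$, hence $\mathcal F_\delta\varphi\in H^1_0(\mathcal T)$ and admissibility.

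I expect the only genuine difficulty to be this bookkeeping: tracking the eighteen image edges, classifying each as boundary or interior, matching the interior ones into mirror pairs, and confirming that a single fixed $\delta$ is simultaneously compatible with the reflection attached to every interior edge. That the specific pattern $(1,-1,1,1,-1,1)$ achieves this is the crux — the non-admissible tiling of Example \ref{exadm} shows that one incompatible reflection already destroys the zero-trace property — so the computation must confirm both that adjacent tiles are true mirror images and that the two $\delta=-1$ labels sit precisely on the two tiles interior to $\mathcal R$.
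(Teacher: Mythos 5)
Your proposal is correct and follows essentially the same route as the paper's proof: for each of the three edges $e$ of $\mathcal T$ one verifies that every image $K_h(e)$ either lies on $\partial\mathcal R$ (where $\varphi$ vanishes) or coincides pointwise with some $K_j(e)$ having $\delta_j=-\delta_h$, which is precisely the paper's edge-by-edge computation with the parametrized boundary points $x_{ij}^\lambda$. Your mirror-pair/bipartite-colouring packaging and the explicit remark on $H^1$ regularity are correct refinements of the same argument, and your reading of the sign pattern (the two tiles $K_2\mathcal T$ and $K_5\mathcal T$, which have no edge on $\partial\mathcal R$, carrying $\delta=-1$) agrees with what the paper's identities yield.
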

\begin{proof}
 We want to show that if $\varphi\in H_0^1(\mathcal R)$ then $\mathcal F_\delta 
\varphi\in H_0^1(\mathcal T)$. To this end let $v_0:=(0,0)$, 
$v_1:=(1/\sqrt{3},0)$ and $v_2:=(0,1)$ be the vertices of $\mathcal T$ and 
define 
 $$x_{ij}^\lambda:=\lambda v_i +(1-\lambda) v_{j}.$$
 so that $\partial \mathcal T= \{x_{ij}^\lambda\mid
\lambda\in[0,1], 0\leq i<j\leq2\}$. 
 By a direct computation, for all $\lambda\in[0,1]$ 
 $$K_1(x_{01}^\lambda), K_6(x_{01}^\lambda)\in \partial \mathcal R,$$
 $$K_2(x_{01}^\lambda)=K_4(x_{01}^\lambda),$$
 and
 $$K_3(x_{02}^\lambda)=K_5(x_{02}^\lambda).$$
 Since $\varphi\in H_0^1(\mathcal R)$ then $\mathcal F_\delta 
\varphi(x_{01}^\lambda)=0$.
Similarly, for all $\lambda\in[0,1]$
$$K_1(x_{02}^\lambda), K_6(x_{02}^\lambda)\in \partial \mathcal R,$$
$$K_2(x_{02}^\lambda)=K_3(x_{02}^\lambda),$$
 and
 $$K_4(x_{02}^\lambda)=K_5(x_{02}^\lambda)$$
 therefore $\mathcal F_\delta \varphi(x_{02}^\lambda)=0$ for all 
$\lambda\in[0,1]$.
 Finally for all $\lambda\in[0,1]$ 
$$K_3(x_{12}^\lambda), K_4(x_{12}^\lambda)\in \partial \mathcal R,$$
 $$K_1(x_{12}^\lambda)=K_2(x_{12}^\lambda),$$
 and
 $$K_5(x_{12}^\lambda)=K_6(x_{12}^\lambda)$$
 therefore we get also in this case $\mathcal F_\delta 
\varphi(x_{12}^\lambda)=0$ for all $\lambda\in[0,1]$ and we may conclude that 
$\mathcal F_\delta \varphi\in H_0^1(\mathcal T)$. 
\end{proof}
\begin{remark}
 Lemma \ref{ladm} was remarked in \cite[p.312]{triangle}, but to the 
best of our knowledge, this is the first time an explicit proof is provided. 
\end{remark}

Now, consider the eigenfunctions of $-\Delta$ in $H_0^1(\mathcal R)$:
 $$\overline e_k(x_1,x_2):=\sin(\pi k_1 \frac{x_1}{\sqrt{3}})\sin(\pi k_2 x_2), 
\quad k=(k_1,k_2)\in\NN^2.$$
We finally define for every $k\in \NN^2$
\begin{equation}\label{def}
e_k(x):=N^2 \mathcal F_\delta \overline e_k= \sum_{h=1}^6 \delta_h \overline 
e_k(K_h x). 
\end{equation}

Next result, proved in \cite{triangle}, states that Assumption \ref{A2} is 
satisfied by $\{e_k\}$.
\begin{lemma}
  The set of functions $\{e_k\}$ defined in \eqref{def} is a complete 
orthogonal 
base for $\mathcal T$ 
  formed by the eigenfunction of $-\Delta$ in $H_0^1(\mathcal T)$.   
  Furthermore $\mathcal P_\delta e_k(x)=e_k(x)$.
\end{lemma}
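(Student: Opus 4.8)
The plan is to verify the three assertions in turn: that each $e_k$ is an eigenfunction of $-\Delta$ lying in $H_0^1(\mathcal T)$, that the family $\{e_k\}$ is orthogonal and complete in $L^2(\mathcal T)$, and finally the symmetry identity $\mathcal P_\delta e_k=e_k$, which I expect to be the delicate point. The quickest part is the eigenfunction property. Since each $K_h$ in \eqref{kdef} is a Euclidean isometry, the Laplacian commutes with precomposition, so $\Delta(\overline e_k\circ K_h)=(\Delta\overline e_k)\circ K_h=-\gamma_k\,(\overline e_k\circ K_h)$; hence every $e_k=\sum_{h=1}^6\delta_h\,\overline e_k\circ K_h$ is again a $\gamma_k$-eigenfunction. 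Membership in $H_0^1(\mathcal T)$ is immediate from Lemma \ref{ladm}: since $\overline e_k\in H_0^1(\mathcal R)$ and $e_k=N^2\mathcal F_\delta\overline e_k$, admissibility of the tiling gives $e_k\in H_0^1(\mathcal T)$. Together these show each nonzero $e_k$ is an eigenfunction of $-\Delta$ in $H_0^1(\mathcal T)$ with eigenvalue $\gamma_k$.

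For completeness I would use a pairing identity between the two domains. For $f\in L^2(\mathcal T)$, writing its prolongation $\mathcal P_\delta f$ on $\mathcal R$ and splitting $\mathcal R$ into the tiles $K_h\mathcal T$, the change of variables $y=K_h x$ together with $\mathcal P_\delta f(K_h x)=\delta_h f(x)$ yields
\begin{equation*}
\langle \mathcal P_\delta f,\overline e_k\rangle_{L^2(\mathcal R)}
=\sum_{h=1}^6\int_{\mathcal T}\delta_h f(x)\,\overline e_k(K_h x)\,dx
=\int_{\mathcal T} f(x)\,e_k(x)\,dx=\langle f,e_k\rangle_{L^2(\mathcal T)}.
\end{equation*}
Thus if $f$ is orthogonal to every $e_k$ in $L^2(\mathcal T)$, then $\mathcal P_\delta f$ is orthogonal to the complete system $\{\overline e_k\}$ in $L^2(\mathcal R)$, forcing $\mathcal P_\delta f=0$ and hence $f=0$; this proves completeness of $\{e_k\}$. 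Orthogonality for indices with $\gamma_k\neq\gamma_\ell$ is automatic from the self-adjointness of $-\Delta$, and within a fixed eigenspace it reduces to the explicit computation carried out in \cite{triangle}, which also records which $\overline e_k$ fold to zero and how the surviving $e_k$ should be indexed.

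The heart of the lemma is the symmetry relation $\mathcal P_\delta e_k=e_k$, equivalently $e_k(K_j x)=\delta_j e_k(x)$ for $j=1,\dots,6$. Expanding,
\begin{equation*}
e_k(K_j x)=\sum_{h=1}^6\delta_h\,\overline e_k(K_hK_j x),
\end{equation*}
so I would show that right-multiplication by $K_j$ permutes the six maps $K_h$ up to symmetries of $\overline e_k$ with compatible signs. Concretely, using the explicit forms \eqref{kdef} one checks that for each pair $(h,j)$ there is a permutation $\pi_j$ of $\{1,\dots,6\}$ and a sign $\epsilon_{h,j}\in\{-1,1\}$ with $K_hK_j=g_{h,j}\,K_{\pi_j(h)}$, where $g_{h,j}$ is a reflection or translation that is a symmetry of every $\overline e_k$ (namely $x_1\mapsto -x_1$, $x_2\mapsto -x_2$, the reflections across $x_1=\sqrt3$ and $x_2=1$, and the corresponding periods) acting on $\overline e_k$ by the factor $\epsilon_{h,j}$, and that these signs satisfy $\delta_h\,\epsilon_{h,j}=\delta_j\,\delta_{\pi_j(h)}$. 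Granting this, $e_k(K_j x)=\sum_h \delta_j\delta_{\pi_j(h)}\overline e_k(K_{\pi_j(h)}x)=\delta_j e_k(x)$, as desired.

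The main obstacle is exactly this last bookkeeping: the maps $K_h$ involve the $60^\circ$ rotation $R_\alpha$, which is \emph{not} a symmetry of $\overline e_k$, so the cancellations occur only after the full sum over $h$ is reassembled. Verifying $K_hK_j=g_{h,j}K_{\pi_j(h)}$ with the sign rule $\delta_h\epsilon_{h,j}=\delta_j\delta_{\pi_j(h)}$ is a finite but genuinely case-by-case computation tied to the geometry of the six tiles and to the specific choice $\delta=(1,-1,1,1,-1,1)$; this is precisely the folding computation of \cite{triangle}, which I would invoke, or reproduce for a representative set of pairs $(h,j)$ with the remaining cases following from the group relations among the $K_h$.
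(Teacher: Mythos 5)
Your proposal is correct in outline, but it takes a genuinely different route from the paper for the simple reason that the paper gives no proof at all: the lemma is attributed wholesale to \cite{triangle}, with the remark elsewhere that only Lemma \ref{ladm} (admissibility) had not previously been written out. What you do differently is to extract three of the four assertions from material already available in the paper: the eigenfunction property from the fact that $\Delta$ commutes with precomposition by the isometries $K_h$ (essentially Claim 1 of Lemma \ref{lcr} run in the opposite direction), membership in $H^1_0(\mathcal T)$ from Lemma \ref{ladm} via $e_k=N^2\mathcal F_\delta\overline e_k$, and completeness from the adjunction $\langle\mathcal P_\delta f,\overline e_k\rangle_{L^2(\mathcal R)}=\langle f,e_k\rangle_{L^2(\mathcal T)}$, which is correct (unit Jacobians, disjoint tiles) and is a genuinely nice self-contained argument needing only completeness of $\{\overline e_k\}$ in $L^2(\mathcal R)$ and injectivity of $\mathcal P_\delta$. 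This buys a proof that is internal to the paper's framework except for one item, whereas the paper's citation buys brevity. The item you defer --- the identity $\mathcal P_\delta e_k=e_k$, i.e.\ $e_k(K_jx)=\delta_j e_k(x)$ --- is precisely the assertion the paper actually needs as Assumption \ref{A2}, and your reduction of it to the relations $K_hK_j=g_{h,j}K_{\pi_j(h)}$ with the sign rule $\delta_h\epsilon_{h,j}=\delta_j\delta_{\pi_j(h)}$ is the right shape of argument; it is exactly Práger's folding computation, so invoking \cite{triangle} there puts you on the same footing as the paper.

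Two caveats you should make explicit if you write this up. First, your scheme works only if every $g_{h,j}$ acts on $\overline e_k$ with a sign \emph{independent of} $k$; this holds for reflections across $x_1\in\{0,\sqrt3\}$, $x_2\in\{0,1\}$ and full-period translations, but would fail for, say, a reflection across $x_1=\sqrt3/2$ or a half-period translation, which act by $(-1)^{k_1}$. Ruling these out is part of the finite case check, not a freebie. Second, as you note in passing, some foldings $e_k$ vanish identically and distinct indices $k$ can produce proportional $e_k$, so the family must be reindexed before it is literally an orthogonal \emph{base}; the paper's statement glosses over this as well, and the bookkeeping is again in \cite{triangle}. Neither point is a gap relative to the paper, but both are where the real content of the lemma lives.
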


\begin{remark}
 For each $k\in\NN^2$, the eigenfunctions $e_k$ and $\bar e_k$ share the same 
eigenvalue $\gamma_k=\pi^2(\frac{k_1^2}{3}+k_2^2)$, see \cite{triangle}.
\end{remark}
Next gives access to classical results on observability of rectangular 
membranes 
for the study of triangular domains. 
\begin{theorem}\label{thmmain}
 Let $\overline S\subset \mathcal R$ be such that the solution $\overline u$ of
  \begin{equation}\label{wavegeneralrect}
 \begin{cases}
  u_{tt}-\Delta u=0& \text{on }\RR\times \mathcal R\\
  u=0&\text{in }\RR\times \partial \mathcal R\\
  u(t,0)=\mathcal P_\delta u_0,~u(t,0)=\mathcal P_\delta u_1&\text{in }\mathcal 
R.
 \end{cases}
\end{equation}
 satisfies for some $T>0$ and some couple of positive constants $c=(c_1,c_2)$ 
 \begin{equation}\label{obsrectangleg}
 \|\mathcal P_\delta u_0\|_0^2+\|\mathcal P_\delta u_1\|^2_{-1}\asymp_c 
\int_0^T\int_{\overline S} |\overline u(t,x)|^2 dx  
 \end{equation}
for all $( u_0, u_1)\in D^0(\mathcal T)\times D^{-1}(\mathcal T)$. 
Moreover let 
$$S:=\bigcup_{h=1}^6 (K^{-1}_h \overline S\cap \mathcal T)$$
Then the solution $u$ of 
  \begin{equation}\label{wavegeneraltr}
 \begin{cases}
  u_{tt}-\Delta u=0& \text{on }\RR\times \mathcal T\\
  u=0&\text{in }\RR\times \partial \mathcal T\\
  u(t,0)=u_0,~u_t(t,0)=u_1&\text{in }\mathcal T.
 \end{cases}
\end{equation}
satisfies 
\begin{equation}\label{obstrg}
\|u_0\|_0^2+\|u_1\|^2_{-1}\asymp_c \int_0^T\int_{S} |u(t,x)|^2 dx. 
\end{equation}
for all $(u_0,u_1)\in D^0(\mathcal T)\times D^{-1}(\mathcal T)$.
\end{theorem}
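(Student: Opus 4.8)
The plan is to read Theorem \ref{thmmain} as the specialization of the abstract Theorem \ref{thmprol} to $\Omega_1=\mathcal T$ and $\Omega_2=\mathcal R$, equipped with the six rigid transformations $\{K_h\}_{h=1}^6$ of \eqref{kdef}, the sign vector $\delta=(1,-1,1,1,-1,1)$, and the eigenbasis $\{e_k\}$ of \eqref{def}. Once Assumptions \ref{A1} and \ref{A2} are checked in this concrete instance, the statement follows from Theorem \ref{thmprol} with $N=6$ with no further computation.

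First I would confirm Assumption \ref{A1}, namely that $(\mathcal T,\{K_h\}_{h=1}^6)$ is an admissible tiling of $\mathcal R$. The tiling identity \eqref{a1triangle} together with the non-overlapping of the pieces $K_h\mathcal T$ is recorded after \eqref{kdef}, and admissibility---that is, $\mathcal F_\delta\varphi\in H_0^1(\mathcal T)$ for every $\varphi\in H_0^1(\mathcal R)$ with the above $\delta$---is exactly Lemma \ref{ladm}. Next I would invoke the lemma stated after \eqref{def}, borrowed from \cite{triangle}, which guarantees that $\{e_k\}$ is a complete orthogonal base of $L^2(\mathcal T)$ formed by eigenfunctions of $-\Delta$ in $H_0^1(\mathcal T)$ and that $\mathcal P_\delta e_k=e_k$; this is precisely Assumption \ref{A2}.

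With both assumptions verified I would apply Theorem \ref{thmprol} to the given $\overline S\subset\mathcal R$ and to $S=\bigcup_{h=1}^6(K_h^{-1}\overline S\cap\mathcal T)$. The hypothesis \eqref{obsrectangleg} is literally the rectangle estimate \eqref{c2} for the prolonged data $\mathcal P_\delta u_0,\mathcal P_\delta u_1$, while the desired conclusion \eqref{obstrg} is literally the triangle estimate \eqref{c1}. Since Theorem \ref{thmprol} asserts \eqref{c1}$\Leftrightarrow$\eqref{c2} for every $T>0$ and every constant pair $c=(c_1,c_2)$, the implication \eqref{obsrectangleg}$\Rightarrow$\eqref{obstrg} is immediate, with the same observation time $T$ and the same constants $c$.

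I do not anticipate a genuine obstacle, since all of the analytic substance is already packaged in Theorem \ref{thmprol} and the lemmas preceding it. The one point to watch is the bookkeeping of the factor $N^2=36$: it enters the observation integral through the disjointness of the tiles and the pointwise identity $u(t,x)=(\delta_h/N)\overline u(t,K_h x)$ of Lemma \ref{lprol}, and it enters the energy through the coefficient scaling $\|\mathcal P_\delta u_0\|_0^2=N^2\|u_0\|_0^2$ and $\|\mathcal P_\delta u_1\|_{-1}^2=N^2\|u_1\|_{-1}^2$ of Lemma \ref{lcr}; since $N^2$ appears identically on both sides it cancels, so the constants $c$ are transported unchanged. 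Because these are exact equalities rather than two-sided estimates, the same reasoning yields the analogue of \eqref{obstrg} with $\asymp_c$ replaced by either $\leq_c$ or $\geq_c$, which is the form needed later for Theorem \ref{stripsedgeintro}.
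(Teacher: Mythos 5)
Your proposal is correct and follows essentially the same route as the paper: the authors likewise verify Assumption \ref{A1} via Lemma \ref{ladm} and Assumption \ref{A2} via the lemma from \cite{triangle}, and then conclude by a direct application of Theorem \ref{thmprol} with $\Omega_1=\mathcal T$, $\Omega_2=\mathcal R$. Your additional remark on the cancellation of the factor $N^2=36$ is a correct reading of the bookkeeping already carried out inside the proof of Theorem \ref{thmprol}.
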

\begin{proof}
 Since $\mathcal T,\mathcal R$ and $\{e_k\}$ satisfy Assumption \ref{A1} and 
Assumption \ref{A2}, then the claim follows by a direct application of Theorem 
\ref{thmprol} with $\Omega_1=\mathcal T$ and $\Omega_2=\mathcal R$.
\end{proof}

We conclude this section by showing that Theorem \ref{thmmainintro} is a direct 
consequence of Theorem \ref{thmmain}:
\begin{proof}[Proof of Theorem \ref{thmmainintro}] 
 By Lemma \ref{lcr}, if $(u_0,u_1)\in D^0(\mathcal T)\times D^{-1}(\mathcal T)$ 
then 
 $(\mathcal P_\delta u_0,\mathcal P_\delta u_1)\in D^0(\mathcal R)\times 
D^{-1}(\mathcal R)$. The claim hence follows by Theorem \ref{thmmain}. 
also apply. 
\end{proof}

\section{Proof of Theorem \ref{stripsedgeintro}}\label{st2}
In this Section we keep all the notations used in Section \ref{st1}. So 
that for instance $\{K_h\}$ are the transformations given in \eqref{kdef}  and 
$\delta=(1,-1,1,1,-1,1)$. 
Also recall the definitions
$$S_{\alpha}:=  {\mathcal T}\setminus cl(r_\alpha \mathcal T+ 
(\alpha,\alpha))$$
and
$$\overline 
S_{\alpha}:=[(0,\sqrt{3})\times(0,\alpha)]\cup[(0,\alpha)\times(0,1)]$$

As mentioned in the Introduction, to prove Theorem 
\ref{stripsedgeintro} we need two auxiliary results: Lemma \ref{lobs1}, which 
is an internal observability result on rectangles, and Lemma \ref{lobs2}, which 
is a geometric result characterizing $S_\alpha$. 

We begin by recalling  the following:

\begin{proposition}[{\cite[Proposition 1.1]{KomMia2013}}]\label{km}
 Let $\Omega=(0,\ell_1)\times (0,\ell_2)$, let $J_1\subset 
(0,\ell_1)$ and $J_2\subset (0,\ell_2)$ and define 
$$\overline S:=[J_1\times (0,\ell_2)]\cup [(0,\ell_1)\times J_2]$$
Also define the positive constants 
$$t_1:=\inf_{k\in\NN} \int_{J_1} \sin\left(\frac{\pi k x}{\ell_1}\right)dx, 
\quad t_2:=\inf_{k\in\NN} \int_{J_2} \sin\left(\frac{\pi k x}{\ell_2}\right)dx.
$$
and 
$$m:=\min\left\{\frac{t_1}{\ell_1},\frac{t_2}{\ell_2}\right\}$$
If $T>0$ satisfies the condition 
\begin{equation}\label{obstime}
 \frac{(\ell_1^2+\ell_2^2)(\ell_1^4+\ell_2^4)}{T^2 \ell_1^2 
\ell_2^2}<m
\end{equation}
then the solutions $u$ of \eqref{wave} satisfy the estimate
\begin{equation}\label{inverse}
|u_0|_0^2+|u_1|^2_{-1}\leq_c \int_0^T\int_{\bar S} |u(t,x)|^2dxdt
\end{equation}
for all $(u_0,u_1)\in D^1(\Omega)\times D^0(\Omega)$, with
\begin{equation}\label{c}
c:=\frac{T}{\pi}\left(m
-\frac{(\ell_1^2+\ell_2^2)(\ell_1^4+\ell_2^4)}{T^2 \ell_1^2 
\ell_2^2}\right)>0.
\end{equation}

\end{proposition}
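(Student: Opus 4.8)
The plan is to diagonalise in the eigenbasis of $-\Delta$ on $\Omega=(0,\ell_1)\times(0,\ell_2)$ and to reduce the two–dimensional estimate to a pair of one–dimensional observability estimates, one per strip, by exploiting the product structure of $\overline S$. Writing
$$u(t,x)=\sum_{k\in\NN^2}c_k(t)\,\sin\frac{\pi k_1x_1}{\ell_1}\sin\frac{\pi k_2x_2}{\ell_2},\qquad c_k(t):=a_ke^{i\omega_kt}+b_ke^{-i\omega_kt},\quad \omega_k=\pi\sqrt{\tfrac{k_1^2}{\ell_1^2}+\tfrac{k_2^2}{\ell_2^2}},$$
the orthogonality of the $\sin(\pi k_ix_i/\ell_i)$ on the \emph{full} intervals gives, up to a fixed normalising constant, $\norm{u_0}_0^2+\norm{u_1}_{-1}^2\asymp\sum_k(\abs{a_k}^2+\abs{b_k}^2)$, so it suffices to bound this quantity from above by $c^{-1}\int_0^T\int_{\overline S}\abs{u}^2$. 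First I would treat $S_1:=J_1\times(0,\ell_2)$: integrating out the full variable $x_2$ and using its orthogonality collapses the double sum to
$$\int_0^T\!\!\int_{S_1}\abs{u}^2\,dx\,dt=\frac{\ell_2}{2}\sum_{k_2\in\NN}\int_0^T\!\!\int_{J_1}\Bigl|\sum_{k_1\in\NN}c_{(k_1,k_2)}(t)\sin\frac{\pi k_1x_1}{\ell_1}\Bigr|^2dx_1\,dt,$$
and symmetrically for $S_2:=(0,\ell_1)\times J_2$. This reduces everything to a one–dimensional internal observability estimate on $J_i\subset(0,\ell_i)$, required to be uniform in the transverse mode.

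The core step is then that one–dimensional estimate, which I would obtain by expanding the inner square and separating the diagonal $k_1=k_1'$ from the off–diagonal part. On the diagonal the spatial factor is exactly $\int_{J_1}\sin^2(\pi k_1x_1/\ell_1)\,dx_1\ge t_1$, which is where the constant $t_1$ — and, after comparison with $\norm{u_0}_0^2+\norm{u_1}_{-1}^2$, the ratio $t_1/\ell_1$ — enters; the temporal factor $\int_0^T\abs{c_{(k_1,k_2)}}^2\,dt$ I would handle by a sharp Ingham–type inequality, which is the source of the $T/\pi$ factor rather than of a crude $T$. Taking the worse of the two directions produces $m=\min\set{t_1/\ell_1,\ t_2/\ell_2}$ and explains the role of both strips: each controls the modes the other resolves poorly, and the uniform constant is governed by the minimum.

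The remaining, and genuinely delicate, contribution is the off–diagonal one, coming both from the spatial cross terms $k_1\ne k_1'$ (the eigenfunctions are \emph{not} orthogonal on the proper subinterval $J_1$) and from the temporal cross terms between distinct frequencies $\omega_j\ne\omega_k$; each carries an oscillatory integral $\int_0^Te^{i(\omega_j-\omega_k)t}\,dt$ that I would bound through the spectral gaps of $\set{\omega_k}$, and the whole double sum must be shown to be dominated by the diagonal one up to a factor decaying like $T^{-2}$. \textbf{This is the main obstacle}: producing a bound that is simultaneously summable over the spatial cross indices, uniform over the infinitely many transverse modes $k_2$ (whose one–dimensional gap degrades as $k_2\to\infty$), and explicit enough to yield the precise constant $(\ell_1^2+\ell_2^2)(\ell_1^4+\ell_2^4)/(\ell_1^2\ell_2^2)$. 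Once this is in hand, collecting the diagonal main term $\tfrac{T}{\pi}m$ and subtracting the off–diagonal correction $\tfrac{T}{\pi}\cdot\tfrac{(\ell_1^2+\ell_2^2)(\ell_1^4+\ell_2^4)}{T^2\ell_1^2\ell_2^2}$ produces exactly the constant in \eqref{c}, while the positivity requirement $c>0$ is precisely the smallness condition \eqref{obstime} on $1/T^2$; note that only the inverse inequality \eqref{inverse} is asserted, so no matching upper bound has to be established.
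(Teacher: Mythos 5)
This proposition is not proved in the paper at all: it is quoted verbatim from \cite[Proposition 1.1]{KomMia2013}, so there is no internal proof to compare your attempt against. Judged on its own terms, your outline correctly identifies the standard strategy (diagonalize, integrate out the full transverse variable to decouple the transverse modes, isolate the diagonal term $\int_{J_i}\sin^2$, which is where $t_i$ and hence $m$ enter --- note in passing that the statement's $t_i$ is written with $\sin$ but must be read as $\sin^2$, exactly as in the paper's own $t_\alpha$ in Theorem \ref{stripsedgeintro}). But you then explicitly park the off-diagonal estimate as ``the main obstacle'' without resolving it, and that estimate is the entire content of the result: the explicit constant in \eqref{obstime}--\eqref{c} and the very fact that \eqref{inverse} holds both live there. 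An argument that names the only hard step and defers it is a gap, not a proof.

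Concretely, the step would not go through in the form you sketch. After integrating out $x_2$ over $(0,\ell_2)$, the strip $J_1\times(0,\ell_2)$ forces you to control, for each fixed $k_2$, the exponential family $e^{\pm i\omega_{(k_1,k_2)}t}$ with $\omega_{(k_1,k_2)}=\pi\sqrt{k_1^2/\ell_1^2+k_2^2/\ell_2^2}$; for fixed $k_1$ the consecutive gaps $\omega_{(k_1+1,k_2)}-\omega_{(k_1,k_2)}$ tend to $0$ as $k_2\to\infty$, so no Ingham-type inequality with a uniform gap (and hence no uniform ``sharp $T/\pi$ factor'') applies to a single strip, and a single strip indeed does not observe. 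The whole point of the cross-shaped set $\overline S$ is that the two strips must be \emph{combined}: the modes for which the $x_1$-gaps degenerate are precisely those for which the $x_2$-strip performs well, and the quantitative bookkeeping of this compensation --- summing the cross terms $1/|\omega_j-\omega_k|$ over $j_1\ne k_1$ uniformly in $k_2$ and trading the resulting frequency growth against the $H^{-1}$ weight --- is what produces the specific factor $(\ell_1^2+\ell_2^2)(\ell_1^4+\ell_2^4)/(T^2\ell_1^2\ell_2^2)$. Your ``take the worse of the two directions, i.e.\ the min'' does not supply this mechanism, since the minimum only governs the diagonal term; to complete the argument you would need to carry out the remainder estimate of \cite{KomMia2013} (or simply cite it, as the paper does).
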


We apply Proposition \ref{km} to prove 
\begin{lemma}\label{lobs1}
Let $T>T_\alpha$. Then the solutions $\overline u$ of  
\eqref{wavegeneralrectintro} satisfy the estimate 
\begin{equation}\label{inverser}
|u_0|_0^2+|u_1|^2_{-1}\leq_{c_\alpha} \int_0^T\int_{\bar S_\alpha} 
|u(t,x)|^2dxdt
\end{equation}
for all $(u_0,u_1)\in D^0(\mathcal R)\times D^{-1}(\mathcal R)$.
\end{lemma}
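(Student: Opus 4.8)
The plan is to derive Lemma~\ref{lobs1} as a direct instance of Proposition~\ref{km}, applied to the rectangle $\mathcal R=(0,\sqrt3)\times(0,1)$, that is with $\ell_1=\sqrt3$ and $\ell_2=1$. First I would match the geometry: writing $\overline S_\alpha=[(0,\alpha)\times(0,1)]\cup[(0,\sqrt3)\times(0,\alpha)]$ exhibits it in the product form $[J_1\times(0,\ell_2)]\cup[(0,\ell_1)\times J_2]$ of Proposition~\ref{km} with $J_1=J_2=(0,\alpha)$. It then remains to translate the hypotheses of the proposition, namely the values of the constants $t_1$, $t_2$, $m$ and the observation-time condition, into the present notation.

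Second, I would compute the directional constants. By definition $t_1=\inf_{k\in\NN}\int_0^\alpha\sin^2(\pi k x/\sqrt3)\,dx=t_\alpha$, while $t_2=\inf_{k\in\NN}\int_0^\alpha\sin^2(\pi k x)\,dx$, so that $m=\min\{t_1/\ell_1,\,t_2/\ell_2\}=\min\{t_\alpha/\sqrt3,\,t_2\}$. The crucial point is to show that this minimum equals $t_\alpha/\sqrt3$. Substituting $x=\sqrt3\,y$ in the integral defining $t_\alpha$ gives $t_\alpha/\sqrt3=\inf_{k\in\NN}\int_0^{\alpha/\sqrt3}\sin^2(\pi k y)\,dy$, which I would compare with $t_2=\inf_{k\in\NN}\int_0^{\alpha}\sin^2(\pi k y)\,dy$. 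For each fixed $k$ the map $\beta\mapsto\int_0^\beta\sin^2(\pi k y)\,dy$ is nondecreasing, hence so is its pointwise infimum over $k$; since $\alpha/\sqrt3\le\alpha$ this yields $t_\alpha/\sqrt3\le t_2$ and therefore $m=t_\alpha/\sqrt3$.

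Third, I would verify the time threshold and the constant. With $\ell_1=\sqrt3$ and $\ell_2=1$ the geometric factor becomes $(\ell_1^2+\ell_2^2)(\ell_1^4+\ell_2^4)/(\ell_1^2\ell_2^2)=4\cdot10/3=40/3$, so condition~\eqref{obstime} reads $\frac{40}{3T^2}<m=\frac{t_\alpha}{\sqrt3}$; solving for $T$ this is exactly the positivity condition $c_\alpha>0$, i.e.\ $T>T_\alpha$. Feeding $m=t_\alpha/\sqrt3$ into \eqref{c} turns the constant of Proposition~\ref{km} into $\frac{T}{\pi}\bigl(\frac{t_\alpha}{\sqrt3}-\frac{40}{3T^2}\bigr)=c_\alpha$, so the estimate \eqref{inverse} becomes precisely \eqref{inverser}, with the pair $(u_0,u_1)$ taken in $D^0(\mathcal R)\times D^{-1}(\mathcal R)$ to match the norm $|u_0|_0^2+|u_1|_{-1}^2$ on the left-hand side.

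I expect the middle step to be the main obstacle: the identity $m=t_\alpha/\sqrt3$ cannot be obtained by a naive pointwise comparison of integrands, because $t_1/\ell_1$ and $t_2/\ell_2$ are defined through separate infima over $k$. The monotonicity-of-the-infimum argument is what makes the comparison legitimate. The remaining steps are bookkeeping: keeping the geometric constant $40/3$ correct and ensuring the regularity spaces and norms in Proposition~\ref{km} are read in the $D^0\times D^{-1}$ scaling demanded by the statement of Lemma~\ref{lobs1}.
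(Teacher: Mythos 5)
Your proposal is correct and follows essentially the same route as the paper's own proof: both apply Proposition~\ref{km} to $\mathcal R$ with $J_1=J_2=(0,\alpha)$, identify the geometric factor $40/3$, and establish $m=t_\alpha/\sqrt3$ by the substitution $x=\sqrt3\,y$ together with the monotonicity of $\beta\mapsto\int_0^\beta\sin^2(\pi k y)\,dy$ before taking the infimum over $k$. The only point both you and the paper leave implicit is the passage from the $D^1\times D^0$ regularity stated in Proposition~\ref{km} to the $D^0\times D^{-1}$ setting of the lemma, so this is not a gap relative to the paper's argument.
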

\begin{proof}
  We apply Proposition \ref{km} to $\Omega=\mathcal R$, so that 
$\ell_1=\sqrt{3}$ and $\ell_2=1$, and to $\overline S=\overline S_\alpha$  -- 
so that $J_1=J_2=(0,\alpha)$. Recall from the statement of Theorem 
\ref{stripsedgeintro} the definition 
$$t_\alpha:=\inf_{k\in \NN} \int_0^\alpha \sin^2(\pi k x/\sqrt{3})dx.$$
Moreover let
$$t'_\alpha:= \inf_{k\in\NN} \int_{0}^\alpha \sin\left(\pi k 
x\right)dx
$$
and
$$m_\alpha:=\min\{t_\alpha/\sqrt{3},t'_\alpha\}.$$
With a little algebraic manipulation of 
\eqref{obstime} we have that if
$$T>\overline T_\alpha:=8\sqrt{\frac{5}{\sqrt{3}}m_\alpha}$$
then 
\begin{equation}
|u_0|_0^2+|u_1|^2_{-1}\leq_{\overline c_\alpha} \int_0^T\int_{\bar S_\alpha} 
|u(t,x)|^2dxdt
\end{equation}
for all $(u_0,u_1)\in D^0(\mathcal R)\times D^{-1}(\mathcal R)$, with
$$\overline c_\alpha:=\frac{T}{\pi}\left(m_\alpha
-\frac{40}{3T^2}\right).$$
Since
\begin{align*}
\frac{t_\alpha}{\sqrt{3}}&=\inf_{k\in\NN}\frac{1}{\sqrt{3}}\int_0^\alpha
\sin^2\left(\frac{\pi k x}{\sqrt{3}}\right)dx \\
&=\inf_{k\in\NN}\int_0^{\alpha/\sqrt{3}}
\sin^2\left(\pi k x\right)dx \leq t'_\alpha,
\end{align*}
then $m_\alpha=\frac{t_\alpha}{\sqrt{3}}$. Hence $T_\alpha=\bar T_\alpha$ and $ 
c_\alpha=\bar c_\alpha$ and this concludes the proof.
\end{proof}

Finally we prove 
\begin{lemma}\label{lobs2}If $\alpha\in(0,1/(3+\sqrt{3})]$ then
 $$S_{\alpha}= \bigcup_{h=1}^6 K_h^{-1}\overline S_{\alpha}\cap \mathcal T.$$
\end{lemma}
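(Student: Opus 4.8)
$$S_\alpha = \bigcup_{h=1}^6 \bigl(K_h^{-1}\overline S_\alpha \cap \mathcal T\bigr)$$
by understanding both sides geometrically and checking equality of open sets in $\mathcal T$. The left-hand side $S_\alpha = \mathcal T \setminus cl(r_\alpha \mathcal T + (\alpha,\alpha))$ is obtained by removing from $\mathcal T$ a scaled-down copy of itself, scaled by $r_\alpha = 1 - \alpha(3+\sqrt 3)$ and shifted to the incenter-ward interior point $(\alpha,\alpha)$. As the remark explains, $S_\alpha$ is precisely the set of points of $\mathcal T$ lying within distance $\alpha$ (in the appropriate edge-perpendicular sense) of one of the three edges of $\mathcal T$. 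So the geometric content of the lemma is: \emph{a point of $\mathcal T$ is within the $\alpha$-strip of some edge of $\mathcal T$ if and only if one of its six images $K_h x$ lands inside $\overline S_\alpha$.}

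The key steps, in order, would be these. First I would record the three edges of $\mathcal T$ with vertices $v_0=(0,0)$, $v_1=(1/\sqrt3,0)$, $v_2=(0,1)$: the bottom edge on $x_2=0$, the left edge on $x_1=0$, and the hypotenuse through $v_1,v_2$. Then I would give an explicit description of $S_\alpha$ as a union of three strips $s_1(\alpha),s_2(\alpha),s_3(\alpha)$, each adjacent to one edge, by computing the defining inequalities of $r_\alpha \mathcal T + (\alpha,\alpha)$ (three half-plane conditions, one per edge of the scaled triangle) and negating them inside $\mathcal T$. The crucial computation is to verify that the strip adjacent to each edge has width exactly $\alpha$ measured perpendicular to that edge; this is where the specific constant $r_\alpha = 1-\alpha(3+\sqrt3)$ and the incenter at $(\alpha,\alpha)$ (for the boundary case) enter, since the inradius scaling is what makes the three perpendicular offsets equal. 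Second, I would turn to the right-hand side: since $\overline S_\alpha = [(0,\sqrt3)\times(0,\alpha)] \cup [(0,\alpha)\times(0,1)]$ consists of the bottom strip and left strip of the rectangle $\mathcal R$ of width $\alpha$, I would compute, for each $h=1,\dots,6$, the preimage $K_h^{-1}\overline S_\alpha \cap \mathcal T$ using the explicit maps in \eqref{kdef}. Each $K_h$ is a rigid motion, so each preimage is a union of strips parallel to edges of $K_h^{-1}\mathcal R$; intersecting with $\mathcal T$ leaves strips along the edges of $\mathcal T$.

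Third, I would match the two descriptions. The heart of the argument is the observation (implicit in the admissibility proof, Lemma \ref{ladm}) that the six tiles $K_h\mathcal T$ meet the two special rectangle-strips in a symmetric way: each edge of $\mathcal T$ is carried by the various $K_h$ onto a boundary-or-strip portion of $\mathcal R$, and conversely each of the two strips of $\overline S_\alpha$ pulls back under the six maps to cover exactly the three $\alpha$-strips of $\mathcal T$. I expect the main obstacle to be bookkeeping: one must verify that after applying all six $K_h^{-1}$ and intersecting with $\mathcal T$, the resulting strips (a) reach width exactly $\alpha$ along \emph{each} of the three edges and (b) do not overshoot or leave gaps, i.e.\ their union is exactly $S_\alpha$ and not a proper sub- or super-set. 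The cleanest way to control this is to exploit the edge-identifications already tabulated in Lemma \ref{ladm}: since $K_2(x_{01}^\lambda)=K_4(x_{01}^\lambda)$ and the analogous relations hold, pairs of tiles fold onto the same rectangle strip, so the width-$\alpha$ neighborhood of each edge of $\mathcal T$ gets hit by the correct pair of preimages. I would finish by checking the boundary case $\alpha = 1/(3+\sqrt3)$ separately, where $r_\alpha = 0$ and $cl(r_\alpha\mathcal T + (\alpha,\alpha))$ degenerates to the single incenter point $(\alpha,\alpha)$, confirming that both sides reduce to $\mathcal T \setminus \{(\alpha,\alpha)\}$, which pins down the constant and closes the proof.
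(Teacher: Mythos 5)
Your geometric reading of both sides is accurate (the three width-$\alpha$ strips, the role of $r_\alpha$ and of the incenter $(\alpha,\alpha)$, and the need to rule out both overshoot and gaps), and the brute-force plan of computing all six preimages $K_h^{-1}\overline S_\alpha\cap\mathcal T$ and comparing with an explicit half-plane description of $S_\alpha$ would, if carried out, prove the lemma. The paper, however, takes a leaner route that avoids computing six preimages. For the inclusion $\bigcup_h K_h^{-1}\overline S_\alpha\cap\mathcal T\subseteq S_\alpha$ it rewrites the removed triangle as an intersection, $r_\alpha\,cl(\mathcal T)+(\alpha,\alpha)=\bigcap_{h}\bigl(r_\alpha K_h^{-1}(cl(\mathcal R))+(\alpha,\alpha)\bigr)$, pushes each term forward by the affine map $K_h$, and reduces everything to the single componentwise inequality $K_h(\alpha,\alpha)-r_\alpha K_h(0)\geq(\alpha,\alpha)$ together with the trivial observation that $\{x_1,x_2\geq\alpha\}\subset\overline S_\alpha^{\complement}$; complementation then yields the inclusion for all six $h$ at once. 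For the reverse inclusion it writes $\mathcal T\setminus\mathcal T_\alpha$ as $A_1\cup A_2$ via the three defining inequalities and observes that only \emph{two} preimages are needed: $A_1=K_1^{-1}(\overline S_\alpha)\cap\mathcal T$ and $A_2=K_4^{-1}(\overline S_\alpha)\cap\mathcal T$. Your approach buys a completely explicit picture of where each tile's preimage lands, at the cost of six rigid-motion computations; the paper's buys brevity by never identifying the individual preimages of $K_2,K_3,K_5,K_6$, only bounding them. One caution about your plan: the appeal to the edge identifications of Lemma \ref{ladm} does not by itself control the preimages $K_h^{-1}\overline S_\alpha$ for $h\in\{2,3,5,6\}$ — those identifications concern how boundary points of $\mathcal T$ fold, not how the rectangle's corner strips pull back into the interior of $\mathcal T$ — so for that step you would have to fall back on the explicit computation (or on an argument like the paper's componentwise inequality). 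Also, the separate treatment of $\alpha=1/(3+\sqrt3)$ is unnecessary: the degenerate case $r_\alpha=0$ is covered uniformly by the general argument.
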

\begin{proof} First of all we recall  $cl(\mathcal R)=\bigcup_{h=1}^6 
K_h(cl(\mathcal T))$. Then \\$cl(\mathcal 
T)=\bigcap_{h=1}^6 K_h^{-1}(cl(\mathcal R))$ and 
\begin{equation}
 \mathcal T_\alpha:=r_\alpha cl(\mathcal T)+(\alpha,\alpha)=\bigcap_{h=1}^6 
r_\alpha K^{-1}_h(cl(\mathcal R))+(\alpha,\alpha))
\end{equation}
Also recall  that $K_h's$ are affine maps. Then for 
each $a\in \RR,~b\in\RR^2$
$$ K_h(a x + b) =a K_h(x)+K_h(b)-K_h(0)\quad \forall x\in 
\RR^2,~ h=1,\dots,6.$$
In particular for all $h=1,\dots,6$
\begin{equation}\label{b}
K_h( r_\alpha K_h^{-1}cl(\mathcal R) + (\alpha,\alpha)) = 
 r_\alpha cl(\mathcal R)+K_h(\alpha,\alpha)-r_\alpha K_h(0)
\end{equation}
By a direct 
computation, the assumption $\alpha\geq 0$ implies
\begin{equation}\label{alphasys}
K_h(\alpha,\alpha)-r_\alpha K_h(0)\geq (\alpha,\alpha),\quad \forall h=1,\dots,6
 \end{equation}
where vector inequalities are meant componentwise. Since 
the complement set $\overline S_\alpha^\complement$ of $\overline S_\alpha$ 
satisfies 
$$\{(x_1,x_2)\in \RR^2 \mid x_1,x_2\geq \alpha\}\subset \overline 
S_\alpha^\complement, $$
\eqref{alphasys} and \eqref{b} imply
$$K_h( r_\alpha K_h^{-1}cl(\mathcal R) + (\alpha,\alpha)) \subset 
S_\alpha^\complement \quad \forall h=1,\dots,6.$$
Therefore 
$$r_\alpha K_h^{-1}cl(\mathcal R) + (\alpha,\alpha) \subset 
K^{-1}_h S_\alpha^\complement \quad \forall h=1,\dots,6$$
and, consequently, 
\begin{align*}
 \mathcal T_\alpha
&= \bigcap_{h=1}^6r_\alpha K^{-1}_h( cl(\mathcal R))+ 
(\alpha,\alpha)
%
\subset  
\bigcap_{h=1}^6 K^{-1}_h(\overline S_\alpha^\complement) \end{align*}
By the definition of $S_\alpha$ we then have
\begin{align*}
 S_\alpha&=  \mathcal T_\alpha^\complement\cap \mathcal T \supseteq 
\left(\bigcap_{h=1}^6 K^{-1}_h(\overline 
S_\alpha^\complement)\right)^\complement\cap \mathcal T=  \bigcup_{h=1}^6 
K^{-1}_h
\overline S_\alpha \cap\mathcal T.
\end{align*}
To prove the other inclusion, note that if
$$(x_1,x_2)\in \mathcal 
T_\alpha^\complement=co\{(0,0),(r_\alpha/\sqrt{3},0),(0,r_\alpha)\}
^\complement+(\alpha,\alpha)$$ then either $x_1\leq \alpha$, $x_2\leq \alpha$ 
or $x_2\geq -\sqrt{3}x_1+1-2\alpha$. Hence
$$\mathcal T\setminus \mathcal T_\alpha= A_1\cup A_2$$
with 
\begin{align*}
A_1:=&\{(x_1,x_2)\in \mathcal T \mid x_1\leq \alpha \text{ or } x_2\leq \alpha
\}\\
&=\overline S_\alpha\cap \mathcal T=K_1^{-1}(\overline S_\alpha)\cap 
\mathcal T\subset \bigcup_{h=1}^6K^{-1}_h
\overline S_\alpha \cap\mathcal T
\end{align*}
and
\begin{align*}
A_2&:=\{(x_1,x_2)\in \mathcal T \mid x_2\geq -\sqrt{3}x_1+1-2\alpha
\}\\
&=K_4^{-1}(\overline S_\alpha)\cap 
\mathcal T\subset \bigcup_{h=1}^6K^{-1}_h
\overline S_\alpha \cap\mathcal T.
\end{align*}
\end{proof}

%
%

\end{document}